\definecolor{hot}{RGB}{65,105,225}
\theoremstyle{plain}
\newtheorem{theorem}{Theorem}[section]
\newtheorem{prop}[theorem]{Proposition}
\newtheorem{lm}[theorem]{Lemma}
\newtheorem{cor}[theorem]{Corollary}
\newtheorem{thrm}[theorem]{Theorem}
\theoremstyle{definition}
\newtheorem{defn}[theorem]{Definition}
\newtheorem{que}[theorem]{Question}
\newtheorem{rmk}[theorem]{Remark}
\newtheorem{assumption}[theorem]{Assumption}
\newtheorem{ex}[theorem]{Example}
\newtheorem*{ex*}{Example}
\def\be{\begin{equation}}
\def\ee{\end{equation}}
\def\bt{\begin{thrm}}
\def\et{\end{thrm}}
\def\bc{\begin{cor}}
\def\ec{\end{cor}}
\def\br{\begin{rmk}}
\def\er{\end{rmk}}
\def\bp{\begin{prop}}
\def\ep{\end{prop}}
\def\bl{\begin{lm}}
\def\el{\end{lm}}
\def\bex{\begin{ex}}
\def\eex{\end{ex}}
\def\bd{\begin{defn}}
\def\ed{\end{defn}}
\newcommand{\CP}{\mathbb{P}}
\newcommand{\C}{\mathbb{C}}
\newcommand{\Z}{\mathbb{Z}}
\newcommand{\Hom}{\mathrm{Hom}}
\newcommand{\Q}{\mathbb{Q}}
\newcommand{\K}{\mathbb{K}}
\newcommand{\V}{\mathcal{V}}
\newcommand{\cha}{\mathrm{char}}
\newcommand{\Fp}{\mathbb{F}_p}
\newcommand{\p}{\overline{\mathbb{F}}_p}
\newcommand{\orb}{\mathrm{orb}}
\newcommand{\M}{\mathbf{M}}
\newcommand{\m}{\mathbf{m}}
\newcommand{\tor}{\mathrm{tor}}
\newcommand{\im}{\mathrm{im}}
\newcommand{\homo}{\mathrm{Hom}}
\newcommand{\sV}{\mathcal{V}}
\newcommand{\sA}{\mathcal{A}}
\newcommand{\rank}{\mathrm{rank }}
\newcommand{\lcm}{\mathrm{lcm}}
\newcommand{\bm}[1]{\mbox{\boldmath{$#1$}}}
\title[]{$L^2$-type invariants and cohomology jump loci  for complex smooth quasi-projective varieties}
\author{Fenglin Li}
\address{Fenglin Li: Zhejiang Financial College, No.118 Xueyuan Street, Hangzhou, Zhejiang, 310018, P. R. China}
\email{fenglin125@126.com}
\author{Yongqiang Liu}
\address{Yongqiang Liu: The Institute of Geometry and Physics, University of Science and Technology of China, 96 Jinzhai Road, Hefei Anhui 230026, P. R. China} 
\email{liuyq@ustc.edu.cn}
\date{\today}
\keywords{Alexander module, Mahler measure, jump loci, orbifold map, $L^2$-Betti number, hyperplane arrangement, Milnor fiber}
\subjclass[2010]{14F45, 32S20, 32S22, 55N10; 14H30, 32S55}
\begin{document}
\maketitle
\begin{abstract}  
Let $X$ be a complex smooth quasi-projective variety with a fixed epimorphism $\nu\colon\pi_1(X)\twoheadrightarrow \Z$. 
We  consider the asymptotic behaviour of invariants such
as Betti numbers with any field coefficients and the order
of the torsion part of singular integral homology associated to $\nu$, known as the $L^2$-type invariants.  
At homological degree one, we give  concrete formulas for these limits by the geometric information of $X$ when $\nu$ is orbifold effective. The proof relies on  a study about cohomological degree one jump loci of $X$. We extend part of Arapura's result for cohomological degree one jump loci of $X$ with complex  field coefficients to the one with positive characteristic field coefficients. 
As an application, when $X$ is a hyperplane arrangement complement, a combinatoric upper bound is given for the number of parallel positive dimensional components in cohomological degree one jump loci with complex coefficients.
 Another application is that we give a positive answer to a question posed by Denham and Suciu for hyperplane arrangement. 
 \end{abstract}

\section{Introduction}
There is a general principal to consider a classical invariant of a finite CW complex and define its analog for its  covering space. This leads to the $L^2$-type invariants.  Atiyah  \cite{Ati76} introduced the notion of $L^2$-Betti numbers in the context of a regular covering  of a closed Riemannian manifold. After that,  there has been vast literature for the $L^2$-invariant theory, see \cite{Luc02}. The $L^2$-invariant theory has attracted considerable interests and has relations with many other fields, such as operator theory and Algebraic $K$-theory.   A particular important result is L\"uck's approximation theorem \cite{Luc94}, which states  that the $L^2$-Betti numbers of the universal cover of a finite polyhedron can be found as limits of normalised Betti numbers of finitely sheeted normal coverings.

\subsection{$L^2$-type invariants} \label{sec 1.1}
Assume that $X$ is a connected finite CW-complex with a fixed group epimorphism $\nu\colon \pi_{1}(X)\twoheadrightarrow\mathbb{Z}$. 
Let $X^\nu$ denote the corresponding infinite cyclic covering space of $X$.
For any positive integer $N$, let $X^{\nu,N}$ denote the covering space of $X$ associated to the following composition of maps
 $$\pi_{1}(X)\overset{\nu}{\twoheadrightarrow}\mathbb{Z}\twoheadrightarrow\mathbb{Z}/N\mathbb{Z}.$$ 
One can study $X^\nu$ by its approximation $X^{\nu,N}$. Consider the following two limits 
%For any field $\K$, consider  the following $i$-th $L^2$-type limit
\be \label{1}  \lim_{N\rightarrow\infty}\frac{\dim H_{i}(X^{\nu,N},\mathbb{K})}{N} \ee
for any field coefficient $\K$ and
 \be \label{2}
 \lim_{N\rightarrow\infty}\frac{\log|H_{i}(X^{\nu,N},\mathbb{Z})_{\mathrm{tor}}|}{N},
 \ee
 where $ |H_{i}(X^{\nu, N},\mathbb{Z})_{\mathrm{tor}}|$ denotes the cardinality of the torsion part of $H_i(X^{\nu,N},\Z)$. 

It is known that the first limit always exists for any field coefficients and does not depend on the choice of chains, see e.g. \cite[Theorem 0.2]{LLS11}.
When $\cha(\K)=0$, the existence of limit follows also from the well known L\"{u}ck's  approximation theorem \cite{Luc94}, which shows that the first limit coincides with the  $L^2$-Betti number  defined by the Von Neumann algebra. (L\"{u}ck's approximation theorem is  proved with much more generality.)
%Meanwhile, Ab\'ert, Jaikin-Zapirain and Nikolov \cite[Theorem 17]{AJN11} showed that the first limit also exists when $\cha(\K)>0$. 
Set
$$\alpha_i(X^\nu ,\K) \coloneqq\lim_{N\rightarrow\infty}\frac{\dim H_{i}(X^{\nu,N},\mathbb{K})}{N} $$
and one can compute it by homology jump loci as follows, see e.g. \cite[Theorem 2.5]{DS14}. 
\bp \label{prop introduction betti number} Let $\K$ be an algebraically closed field. 
With the assumptions and notations  as above,  for any $i\geq 0$ we have 
\begin{center}
$ \alpha_i(X^\nu,\K)=\dim H_i(X, L_\rho)$ for $\rho \in \K^*$ being general.
\end{center}
 Here $L_\rho$ is the rank one $\K$-local system on $X$ corresponding to $\rho$ pulling back by $\nu$.
In particular, $\alpha_i(X^\nu,\K)$ is always an integer.
\ep 
 We will give a self-contained proof for the above proposition, see  Proposition \ref{prop betti number}.  When $X$ is a complex smooth projective variety and $\K=\C$, similar formulas for Betti numbers and Hodge numbers can be found in \cite{CL19}. When $X$ is relaxed to be a complex smooth quasi-projective variety, there are a lot of works to study the polynomial periodicity for the Betti numbers and Hodge numbers of congruence covers, see \cite[Section 5]{Suc01} and \cite{Bud09}. However we do not pursue this direction in this paper.

On the other hand, the  limit in (\ref{2}) also exists, e.g. see \cite[Theorem 5]{Le14}, and it can be computed by the Mahler measure (see Definition \ref{def MM}) of the $i$-th integral  Alexander polynomial $\Delta_i(X^\nu) \in \Z[t,t^{-1}]$ (see Definition \ref{def Alexander}). 
Set $$\M_i(X^\nu)\coloneqq \lim_{N\rightarrow\infty}\frac{\log|H_{i}(X^{\nu,N},\mathbb{Z})_{\mathrm{tor}}|}{N}.$$
 %When $X$ is a link complement, the multivariable version of this limit is studied by Silver and Williams in \cite{SW1,SW2}.  

 The above discussion shows that these two limits  are determined by jump loci and integral Alexander polynomials.  When $X$ is a smooth complex quasi-projective variety, under certain assumptions for $\nu$  we give concrete formulas to compute these limits at homological degree one (namely $i=1$) by  geometric information of $X$. The formulas rely on a detailed study about cohomological degree  one jump loci of $X$.

\subsection{Main results}

\subsubsection{Cohomology jump loci and orbifold maps} \label{sec 1.2.1}
 Let $X$ be a smooth complex quasi-projective variety with $\pi_1(X)=G$ and $\K$ be an algebraically closed field.
The group of $\K$-valued  characters, $ \homo(G,\K^*)$, is a commutative affine algebraic group. Each character $\rho \in \homo(G,\K^*)$ defines a rank one $\K$-local system on $X$, denoted by $L_{\rho}$.
\bd \label{def jump loci}
The cohomology jump loci of $X$ are defined as
$$\sV^i_k(X,\K)\coloneqq \lbrace \rho\in \homo(G,\K^*) \mid \dim_{\K} H^{i}(X, L_{\rho})\geq k \rbrace.$$ 
When $k=1$, we simply write $\sV^i(X,\K)$.
\ed

When $\K=\C$, the cohomology jump loci of complex smooth quasi-projective variety  have been  intensively studied. In particular, the well-known structure theorem (see e.g. Theorem \ref{structure theorem}) for $\sV^i_k(X,\C)$ put strong constraints for the homotopy type of complex smooth quasi-projective variety.  If we focus on  cohomological degree one jump loci, a  celebrity result  due to Beauville \cite{Bea}, Arapura \cite{Ara97} and Artal Bartolo, Cogolludo-Agust\'in and Matei \cite{ACM13} puts  even stronger constraints for its fundamental group.

To explain their results, we recall the definition of orbifold maps. Let  $\Sigma_{g,r}$ be a complex smooth  algebraic curve of genus $g\geq 0$ with $r\geq 0$ points removed. We always assume that $b_1(\Sigma_{g,r}) = 2g+r-1>0$, i.e., $\Sigma_{g,r} \neq \mathbb{CP}^1,\C$.
\begin{defn} \label{def orbifold map}
Let $X$ be a complex smooth quasi-projective variety.
An algebraic map $f:X \to \Sigma_{g,r}$  is called an orbifold map, if $f$ is surjective and has connected generic fiber. There exists a maximal Zariski open subset $U\subset \Sigma_{g,r}$ such that $f$ is a fibration over $U$. Say $B=\Sigma_{g,r}-U$ (could be empty) has $s $ points, denoted by $\{q_1,\ldots,q_s\}$.   We assign
the multiplicity $\mu_j\geq 1$ of the fiber   $f^{*}q_j$  (the $\gcd$ of the coefficients of the divisor $f^* q_j$) to the point $q_j$. Then $f^{-1}(q_j)$ is called a multiple fiber of $f$ if $\mu_j>1$. 
Such orbifold map $f$ is called of type $(g,r,{\bm \mu})$ with ${\bm \mu}=(\mu_1,\ldots,\mu_s)$.  

We say $f$ has no multiple fiber  if $\prod_{j=1}^s \mu_j=1$.  
 $f$ is called   null if $2g+r-2=0$ and $\prod_{j=1}^s \mu_j=1$, otherwise $f$ is called hyperbolic.  
\end{defn}
The orbifold group  associated to these data is  defined as
\[ \pi_{1}^{\orb}(\Sigma_{g,r},{\bm \mu})\coloneqq \pi_{1}(\Sigma_{g,r}\backslash \{q_{1}, \ldots, q_{s}\})/ \langle \gamma_{j}^{\mu_{j}}=1 \text{ for all } 1\leq j \leq s\rangle, \]
where $\gamma_{j}$ is a meridian of $q_{j}$. 
 When $\Sigma_{g,r}$ is clear in the context, we simply write $\Sigma$. 
%Our definition of orbifold maps is a bit of different from the classical one, e.g. see \cite[Section 4.1]{DS14}, as we also include null orbifold maps. For example, if $f\colon \C^{n} \to \C$ is a reduced weighted homogeneous polynomial, %then $f\colon \C^n\setminus f^{-1}(0) \to \C^*$ gives the Milnor fibration, hence  a null orbifold map.
 An orbifold map $f\colon X\to \Sigma$ of type $(g,r,{\bm \mu})$ induces a surjective map on the fundamental groups  (see e.g. \cite[Lemma 3]{CKO}) 
 $$f_* \colon \pi_1(X) \twoheadrightarrow \pi_{1}^{\orb}(\Sigma_{g,r},{\bm \mu}),$$
which  induces an embedding 
$$\sV^1(\pi_{1}^{\orb}(\Sigma_{g,r},{\bm \mu}),\K) \to \sV^1(X,\K),$$
see e.g. \cite[Proposition A.1]{Suc14B}.
The following theorem shows that every positive dimensional component of $\sV^1(X,\C)$ can be realized by the orbifold map in this way. This idea first appeared in Beauville’s work \cite{Bea} for the projective case and was extended to the
quasi-projective case by Arapura \cite{Ara97}. Further properties were found by Dimca \cite{Dim07,Dim09},  Dimca, Papadima and Suciu \cite{DPS08,DPS09}, Artal Bartolo, Cogolludo-Agust\'in and Matei \cite{ACM13}, etc.    
We  recall the theorem here and adjust it for our needs. 
\bt \label{thm Ara} \cite{Ara97,ACM13} 
Let $X$ be a complex smooth quasi-projective variety. %Then the following two statements hold.  
\begin{itemize}
\item[(a)] We have
\[
\V^1(X,\C)=\bigcup_{f }{f^*} \sV^1(\pi_1^{\orb}(\Sigma_{g,r},{\bm \mu})),\C)\cup Z,
\]
where $Z$ is a finite set of torsion points and the union runs over a finite set of hyperbolic orbifold maps $f\colon  X\rightarrow \Sigma$ of of type $(g,r,{\bm \mu})$.   
%Moreover, any two positive dimensional component of $\sV^1(X,\C)$ only intersects at finitely may torsion points.
%In particular, every irreducible component in $f^*\mathcal{V}^1(\pi_1^{\orb}(\Sigma_{g,r},{\bm \mu}),\C)$ has dimension $2g+r-1$ for $r>0$ and $2g$ for $r=0$.
\item[(b)]   Given an orbifold map $f\colon X\to \Sigma$ of type $(g,r,{\bm \mu})$ and any  $\rho \in \homo(\pi_1^{\orb}(\Sigma_{g,r},{\bm \mu}),\C^*)$, we have
$$\dim H^1(X,f^* L_\rho)\geq \dim H^1( \pi_1^{\orb}(\Sigma_{g,r},{\bm \mu}), L_\rho)$$
and equality holds with finitely many (torsion) exceptions. %Here $L_\rho$ is the rank one $\C$-local system on $\Sigma$ corresponding to $ \rho$.} 
\end{itemize}
\et

In this paper, we consider the  above theorem with arbitrary algebraically closed field coefficients.  Theorem \ref{thm Ara}(a) 
is generalized by Delzant  as follows using Bieri-Neumann-Strebel invariants when $X$ is smooth projective.
\bt \label{thm Delz}\cite{Delz} Let $X$ be a complex smooth projective variety   and $\K$ be an algebraically closed field. Then   we have
\[
\V^1(X,\K)=\bigcup_{f }{f^*} \sV^1(\pi_1^{\orb}(\Sigma_{g,r},{\bm \mu})),\K)\cup Z',
\]
where $Z'$ is a finite set of points (depends on $\K$) and the union  runs over a finite set of hyperbolic orbifold maps   $f\colon  X\rightarrow \Sigma$ of type $(g,r,{\bm \mu})$.   
 \et 

We prove the following generalization of  Theorem \ref{thm Ara}(b). 
%To avoid too many notations in the introduction, we present here a simplified version of Theorem \ref{thm jump loci orb}. 

\bt \label{thm jump loci} Let $X$ be a complex smooth quasi-projective variety  and $\K$ be an algebraically closed field. Given an orbifold map $f\colon X\to \Sigma$ of type $(g,r,{\bm \mu})$ and any  $\rho \in \homo(\pi_1^{\orb}(\Sigma_{g,r},{\bm \mu}),\K^*)$, 
we have 
$$  \dim H^1(X,f^* L_\rho)\geq \dim H^1(\pi_1^{\orb}(\Sigma_{g,r},{\bm \mu}),L_\rho)$$
and equality holds with finitely many exceptions. 
\et 
 For any  $\rho \in \homo(\pi_1^{\orb}(\Sigma_{g,r},{\bm \mu}),\K^*)$, $\dim H^1(\pi_1^{\orb}(\Sigma_{g,r},{\bm \mu}),L_\rho)$ is computed in subsection \ref{section orbifold groups}. Theorem \ref{thm jump loci} together with Theorem \ref{thm Delz} indeed gives a complete description of all positive dimensional component of $\sV^1_k(X,\K)$ for all $k$ when $X$ is smooth projective.

\br 
 Theorem \ref{thm Ara} (see e.g. \cite[Theorem 9.3]{Suc14}) and Theorem \ref{thm Delz} (\cite{Del}) also work for compact K\"ahler manifold. Correspondingly, one can also prove  Theorem \ref{thm jump loci} for compact K\"ahler manifold by the same proof presented  in this paper.
\er

\subsubsection{Formulas for $L^2$-type invariants}
Next we connect the epimorphism $\nu$ to the orbifold maps.
\bd \label{def orbifold effective}
Let $X$ be a complex smooth quasi-projective variety with an epimorphism $\nu\colon \pi_1(X)\twoheadrightarrow\Z$. We say that $\nu$ is orbifold effective if there is an orbifold map $f\colon X\rightarrow\Sigma$ of type $(g,r,{\bm \mu})$  such that $\nu$ factors through $f_*$ as follows:
\[\xymatrix{
\pi_1(X)\ar@{->>}"1,3"^{\nu}\ar@{->>}[dr]_{f_*} & & \Z, \\
 & \pi_1^{\orb}(\Sigma_{g,r}, {\bm \mu})\ar@{->>}[ur] &
}\]
 We  say that $\nu$ is  orbifold effective by $f$ and call $\nu$ being of type $(g,r,{\bm \mu})$.
\ed

\br \label{remark}  
This definition is well-defined, see Remark \ref{rem well-defined}.
Note that there are examples where $\nu$ is not orbifold effective, see Example \ref{ex non orbifold effective}. On the other hand,  if $H^1(X,\Q)$ is a pure Hodge structure of type $(1, 1)$, (i.e. $X$ has a smooth compactification $\overline{X}$ such that $H^1(\overline{X},\Q)=0$), then any epimorphism  $\nu\colon \pi_1(X)\twoheadrightarrow\Z$ is orbifold effective. This is why we include null orbifold maps in Definition \ref{def orbifold map}. 
Typical examples are hyperplane arrangement complement, see Remark \ref{rem Hodge}.
\er

\bt \label{thm L2}
Let $X$ be a complex smooth quasi-projective variety and $\K$ be a field with $\cha(\K)=p\geq 0$. 
Consider an epimorphism $\nu\colon \pi_1(X)\twoheadrightarrow \Z$. Then we have the following.
\begin{itemize}
\item[(a)] If $\nu$ is orbifold effective of type $(g,r, {\bm \mu})$, we have
\begin{equation*}
\alpha_1(X^\nu,\K)=  2g+r-2+ \#\{ j  \mid p \text{ divides } \mu_j\}
\end{equation*}  
(here we use the convention that $0$ does not divide any nonzero integer) and 
\begin{equation*}
\M_1(X^\nu)=\sum_{j=1}^s \log \mu_j.
\end{equation*}   
In particular, $\Delta_1(X^\nu)$ is a product of $ \prod_{j=1}^s \mu_j$ with some cyclotomic polynomials.
\item[(b)] Assume that $X$ is either smooth projective or  $H^1(X,\Q)$ is a pure Hodge structure of type $(1, 1)$. 
If  $\alpha_1(X^\nu,\K)>0$ for some field $\K$ or $\M_1(X^\nu)>0$, $\nu$ is orbifold effective by a hyperbolic orbifold map.
\end{itemize}  
\et

The above formula  for $\alpha_1(X^\nu,\K)$ follows from Theorem \ref{thm jump loci} directly and the one for $\M_1(X^\nu)$ follows by a similar proof to Theorem \ref{thm jump loci}. Theorem \ref{thm L2}(b) follows from Theorem \ref{thm Delz} and Remark \ref{remark}.

\subsection{Application to hyperplane arrangements}
Let $X$ be the complement of a hyperplane arrangement in $\CP^n$.
Given an epimorphism $\nu\colon\pi_{1}(X)\rightarrow\Z$ and a field $\K$, we denote $\nu_{\Z}$ and $\nu_\K$ the corresponding element in $H^{1}(X,\Z)$ and $H^1(X,\K)$, respectively. Note that $\nu_{\Z}\cup \nu_{\Z}=0,$ hence   $\nu_{\K}\cup \nu_{\K}=0$. 
Then we get two chain complexes by cup product
\[ \xymatrix{
(H^{\ast}(X,  \Z) , \cdot\nu_{\Z})\colon & H^{0}(X,\Z)\ar[r]^{\nu_{\Z}} & H^{1}(X,\Z)\ar[r]^{\nu_{\Z}} & H^{2}(X,\Z)\ar[r] & \cdots \\
(H^{\ast}(X,\K),\cdot \nu_{\K})\colon & H^{0}(X,\K)\ar[r]^{\nu_{\K}} & H^{1}(X,\K)\ar[r]^{\nu_{\K}} & H^{2}(X,\K)\ar[r] & \cdots
} .\]
\bd 
We define {\it  the $i$-th Aomoto Betti number with $\K$-coefficients} as
\[ \beta_{i}(X,\nu_{\K})\coloneqq\dim_{\K}H^{i}(H^{\ast}(X,\K), \cdot\nu_{\K}). \]
and {\it  the $i$-th Aomoto torsion number} as
\[ \tau_i(X,\nu_\Z)\coloneqq | H^{i+1}(H^*(X,\Z), \cdot\nu_\Z)_{\tor}|. \]
%where %$\Tor H^{i+1}(H^*(X,\Z), \cdot\nu_\Z)$ denote the torsion part of $H^{i+1}(H^*(X,\Z), \cdot\nu_\Z)$ and 
Here the shift by 1 is due to the Universal Coefficient Theorem.
\ed
Since it is well-known that the cohomology ring $H^*(X,\Z)$ are combinatorially determined, so are  $\beta_*(X,\nu_\K)$ and $\tau_*(X,\nu_\Z)$ once $\nu$, considered as an element in $H^1(X,\Z)$, is fixed. 
 For any field $\K$, Papadima and Suciu \cite{PS10} showed that $$  \alpha_i(X^\nu,\K) \leq \beta_i(X,\nu_\K).$$
On the other hand, we give the following combinatorial upper bound for  $\M_*(X^\nu)$.
\bp \label{prop comb} Let $X$ be the complement of a hyperplane arrangement.
For any epimorphism $\nu\colon \pi_1(X) \to \Z$ and any $i\geq 0$, we have 
 $$ \exp(\M_i(X^\nu)) \mid \tau_i(X, \nu_\Z).$$ In particular, if $\nu$ is orbifold effective of type $(g,r,{\bm \mu})$, then \be \label{upper bound for mu}
  \prod_{j=1}^s \mu_j \mid \tau_1(X,\nu_\Z). \ee
\ep 
%\br  
%There are examples of hyperplane arrangements where $\alpha_i(X,\K)\neq \beta_i(X,\K)$ and $\prod_{j=1}^s\mu_j\neq\tau_1(X,\nu_\Z)$, see Example \ref{ex 1}, \ref{ex 2}.
%\er

Based on Theorem \ref{thm Ara}(a) and the computations given in subsection \ref{section orbifold groups} later, we give a different interpretation of (\ref{upper bound for mu}) as follows. 
\bc Let $X$ be a hyperplane arrangement complement.  Let $V $ be a  positive dimensional irreducible  component of $\sV^1(X,\C)$ and say it has $m$ many parallel components (including itself) in $\sV^1(X,\C)$. 
An epimorphism $\nu\colon \pi_1(X) \to \Z$ induces an embedding
$\nu^*\colon \C^*\to \Hom(\pi_1(X),\C^*) .$
If there exists some $\rho\in \homo(\pi_1(X),\C^*)$ such that 
$ \im \nu^* \subset \rho \cdot V$, we have the following:
\begin{itemize}
\item If $\dim V\geq 2$, then $m$ divides  $\tau_1(X,\nu_\Z)$.
\item If $\dim V=1$, then $m+1$ divides $\tau_1(X,\nu_\Z)$.
\end{itemize} 
\ec 
\br 
It is a long-standing  open question for hyperplane arrangement complement $X$ if $\sV^1(X,\C)$ is  combinatorially determined.  
It is known that the positive dimension components of $\sV^1(X,\C)$ passing through the origin are combinatorially determined (see e.g. \cite[Section 3]{Suc14}) and can be described by the multinet structure due to Falk and Yuzvinsky \cite{FY07}. Then the above corollary gives a combinatorial upper bound  for the number of its parallel components, which is new up to our knowledge. 
Under certain assumptions for the multinet (see Assumption \ref{assumption}), we show that $\sV^1(X,\C)$ has no parallel two dimensional components associated to this multinet.

On the other hand, there is an approach, called pointed multinet structure, to find the translated positive dimensional component of $\sV^1(X,\C)$, which is  introduced by Suciu, e.g., see \cite[Definition]{DS14}. 
\er

Since $H_*(X,\Z)$ is always torsion free for hyperplane arrangement complement $X$, people was wondering if the Milnor fiber of a central hyperplane arrangement also exhibits the same property. In \cite{DS14}, Denham and Suciu provided a complete answer to this question by showing that for any prime number $p\geq 2$ there is a central hyperplane arrangement  whose Milnor fiber has non-trivial $p$-torsion in homology. Their results hold under the assumption that $p$ does not divide the number of hyperplanes %the number of hyperplanes of $\sA$ 
and they asked if this assumption can be dropped \cite[Question 8.12]{DS14}. We
follow Denham and Suciu's construction in \cite{DS14} and answer their question positively. 

\bt \label{thm our DS} %Let $p$ be a fixed prime number. Then there exists hyperplane arrangements $\sA$ such that the Milnor fiber 
For every prime $p\geq 2$, there is a central hyperplane arrangement  whose Milnor
fiber  has non-trivial $p$-torsion in homology and $p$ divides  the number of hyperplanes in the arrangement. 
\et

\subsection{Structure of the paper}
In section 2, we introduce the Alexander modules and Alexander polynomials for the pair $(X,\nu)$ and study their relations with the $L^2$-type invariants. Section 3 is devoted to the cohomology jump loci and detailed calculations of these invariants for the orbifold group. In section 4, we prove Theorem \ref{thm jump loci} and Theorem \ref{thm L2}. Section 5 is devoted to hyperplane arrangements. In section 5.1, we prove Proposition \ref{prop comb} and compute some examples; while in section 5.2, we show that if a multinet of line arrangement admits certain kind of combinatoric structure,  then the corresponding orbifold map has no mutliple fiber. 
In section 5.3, we prove Theorem \ref{thm our DS}.

\subsection{Notations}
\begin{enumerate}
\item $\K$ is assumed to be an algebraically closed field in the rest of paper and $\cha(\K)$ denotes the characteristic of $\K$.
\item For any prime number $p\geq 2$, let $\p$ denote the algebraic closure of the finite field $\mathbb{F}_p $ with $p$ elements.
\item Let $\Z_{>0}$ and $\Z_{\geq 0}$ denote the collection of positive integers and non-negative integers, respectively.
\end{enumerate}

\textbf{Acknowledgments.} We thank Laurentiu Maxim, Botong Wang and Zhenjian Wang for valuable discussions.
The second named author is  partially supported  by National Key Research and Development Project SQ2020YFA070080, NSFC grant No. 12001511, the Project of Stable Support for Youth Team in Basic Research Field, CAS (YSBR-001),  the project ``Analysis and Geometry on Bundles" of Ministry of Science and Technology of the People's Republic of China and  Fundamental Research Funds for the Central Universities.

\section{Alexander and $L^2$-type invariants}
In this section, we always assume that  $X$ is a connected finite CW-complex with a fixed group epimorphism $\nu\colon\pi_{1}(X)\twoheadrightarrow\mathbb{Z}$. 
Denote by $X^\nu$ and $X^{\nu,N}$ the corresponding covering spaces of $X$ as introduced in Section \ref{sec 1.1}. We study various topological properties of $X^\nu$ and its approximation $X^{\nu,N}$.

\subsection{Algebraic Preliminaries}
We start with some basic commutative algebra facts.
Let $R$ be a Noetherian UFD and $M$ be a finitely generated $R$-module. The rank of $M$, denoted by $\rank M$, is defined to be $\dim_{Q}Q\otimes_{R}M$, where $Q$ is the fraction field of $R$.
Consider a finite presentation of $M$
\[ \xymatrix{
R^{p}\ar[r]^{\partial} & R^{q}\ar[r] & M\ar[r] & 0,
} \]
where $\partial$ is a $(p\times q)$ matrix with entries in $R$. Assume that the matrix $\partial$ has rank $r$. Let
$I(M)$ denote the ideal generated by all possible $(r\times r)$-minors of $\partial$. 
Two finite presentations of $M$ can be related by a sequence of elementary operations, so this ideal does not depend on the choice of presentation. Since $R$ is a UFD, we can define the greatest common divisor of an ideal and set 
\[ \Delta(M)\coloneqq \mathrm{gcd}(I(M)). \]
Then $\Delta(M)$ is defined uniquely up to a multiplication with a unit of $R$. 
When $\partial=0$, $\Delta(M)=1$ by convention.
\par
\bl \cite[Lemma 4.9]{Tur01} \label{lemma tor}
With the assumptions and notations as above, we have
\[ \Delta(M)=\Delta(M_{\tor}),\]
where $M_\tor$ denote the torsion sub-module of $M$.
\el
\par

\bl \label{lem commutative algebra}
Let $0\to M'\to M \to M''\to 0$ be a short exact sequence of finitely generated $R$-modules. %with  $\rank M=\rank M'+\rank M''$. 
Then we have $$\Delta(M)\mid \Delta(M')\cdot \Delta(M'').$$
 Moreover, if  $\rank M=\rank M''$ (i.e. $\rank M'=0$), we have  $$\Delta(M)= \Delta(M')\cdot \Delta(M'').$$ 
\el
\begin{proof}
The first claim follows from Horseshoe Lemma \cite[Horseshoe Lemma 2.2.8]{Wei94}. In fact, for a finite presentation of $M'$ with matrix $\partial'$ and one for $M''$ with matrix $\partial''$, Horseshoe Lemma gives a way to construct a finite presentation for $M$ with matrix \begin{center}
$\begin{pmatrix}
\partial' & *  \\
0  & \partial'' 
\end{pmatrix}.$
\end{center}
Since localization is an exact functor, we get  $\rank M=\rank M'+\rank M''$, which implies \begin{center}
$\rank\begin{pmatrix}
\partial' & *  \\
0  & \partial'' 
\end{pmatrix} = \rank \partial'+\rank \partial''.$
\end{center}
Then it is easy to see $\Delta(M)\mid \Delta(M')\cdot \Delta(M'').$

For the second claim, we have the following commutative diagram:
$$\xymatrix{
M \ar[r] \ar[d] & M'' \ar[d] \\
M\otimes_R Q \ar[r] & M''\otimes_R Q
} $$
where $Q$ is the fraction field of $R$. The additional assumption $\rank M= \rank M''$ implies that the bottom horizontal map is an isomorphism. We claim that the induced map $M_{\tor} \to M''_{\tor}$ is surjective. For any $x\in M''_{\tor}$, there exists $y\in M$ maps to $x$. Note that $x=0$ in $M''\otimes_R Q$, hence so is $y$, which implies $y\in M_{\tor}$. Note that taking torsion part is a left exact functor. Putting all together we get a short exact sequence of finitely generated $R$-modules:
$$ 0\to M'_{\tor} \to M_{\tor} \to M''_{\tor}\to 0.$$
It implies $\Delta(M_{\tor})=\Delta(M'_{\tor})\cdot \Delta(M''_{\tor})$, see e.g. \cite[Lemma 5]{Lev67}. Then the claim follows from Lemma \ref{lemma tor} directly.  
\end{proof}

\subsection{Alexander modules and Alexander polynomials}
Recall that $X$ is a connected finite CW-complex with an epimorphism $\nu\colon\pi_{1}(X)\twoheadrightarrow\mathbb{Z}$. 
The group of covering transformations of $X^{\nu}$ is isomorphic to $\Z$ and acts on it.   By choosing lifts of the cells of $X$ to $X^{\nu}$, we obtain a free basis for the cellular chain complex (with $R$-coefficients) of $ X^\nu$ as   $R[t,t^{-1}]$-modules, where $R[t,t^{-1}]= R[\Z]$.   So the cellular chain complex of $X^{\nu}$, $C_{*}(X^{\nu}, R)$, is a bounded complex of finitely generated free $R[t,t^{-1}]$-modules:
\be \label{chain complex}
 \cdots  \to  C_{i+1}(X^\nu, R) \overset{\partial_{i}^{R}}{\to} C_i(X^\nu, R) \overset{\partial_{i-1}^{R}}{\to} C_{i-1}(X^\nu, R)  \overset{\partial_{i-2}^{R}}{\to}   \cdots \overset{\partial_0^{R}}{\to} C_0(X^\nu, R)  \to 0 .
\ee
With the above free basis for $C_*(X^\nu,R)$, $\partial^R_i$ can be written down as a matrix with entries in $R[t,t^{-1}]$. Note that $R[t,t^{-1}]$ is also a Notherian UFD. \par

\bd \label{def Alexander} The $i$-th homology group $H_i(X^\nu, R)$ of $C_{*}(X^{\nu},R)$, regarded as a finitely generated $R[t,t^{-1}]$-module, is called {\it the $i$-th  homology Alexander module of the pair $(X,\nu)$ with $R$-coefficients}. %We set $\alpha_i(X^\nu,R)\coloneqq \rank H_i(X^\nu,R)$ 
 $ \Delta(H_{i}(X^\nu,R))$  is called {\it the $i$-th Alexander polynomial of $(X,\nu)$  with $R$-coefficients},  denoted by $\Delta_{i}(X^\nu,R)$. 

Consider the case $R=\Z$.
Note that 
$\Delta_i(X^\nu,\Z)$ is only defined up to multiplication by a unit in $\Z[t,t^{-1}]$.  But there is a unique representative of the associate class with no negative powers of $t$,
 non-zero constant term and positive coefficient for the leading term. We simply use the notation $\Delta_i(X^\nu)$ to denote this representative. 
In particular,  $\Delta_i(X^\nu)$ is an integer valued  polynomial. 
\ed

Note that the homology Alexander modules and Alexander polynomials are homotopy invariants for the pair $(X,\nu)$.
 In particular, $\Delta_1(X^\nu,R)$  depends only on $\pi_1(X)$ and $\nu$. 
One can also define  $\Delta_i(X^\nu,R)$ from the map $\partial^R_i$ in (\ref{chain complex}) directly.
\bp \label{another def for Alexander polynomial}
Suppose that the $R[t,t^{-1}]$-module map $\partial_{i}^R$ in (\ref{chain complex}) has rank $r$. Then $\Delta_{i}(X^\nu,R)$ is equal to the greatest common divisor of all possible $(r\times r)$-minors of $\partial_{i}^R$.
\ep
\begin{proof}
Note that we have the following short exact sequence of $R[t,t^{-1}]$-modules
\[ 0\rightarrow H_i(X^\nu,R)\rightarrow C_i(X^\nu,R)/\im \partial_{i}^R\rightarrow \im \partial_{i-1}^R\rightarrow 0, \]
where $\im$ denotes the image functor.
Since $\mathrm{im}\partial_{i-1}^R$ is torsion free, we get 
\[ H_i(X^\nu,R)_{\tor}\cong(C_i(X^\nu,R)/\mathrm{im}\partial_i^R)_{\tor}. \]
Then Lemma \ref{lemma tor} implies that
\[ \Delta_i(X^\nu,R)=\Delta(H_i(X^\nu,R)_{\mathrm{tor}})=\Delta((C_i(X^\nu,R)/\mathrm{im}\partial_i^R)_{\mathrm{tor}})
=\Delta(C_i(X^\nu,R)/\mathrm{im} \partial_i^R). \]
Consider the finite presentation of $C_i(X^\nu,R)/\mathrm{im} \partial_i^R$  $$C_{i+1}(X^\nu, R) \overset{\partial_{i}^{R}}{\to} C_i(X^\nu, R)\to C_i(X^\nu,R)/\mathrm{im} \partial_i^R \to 0$$
Then the claim follows by definition.
\end{proof}

 \subsection{$L^2$-type invariants}
 \subsubsection{$L^2$-Betti number}
As mentioned in Section \ref{sec 1.1}, for any field $\K$  the following limit
$$ \alpha_i(X^\nu,\K)=\lim_{N\rightarrow\infty}\frac{\dim H_{i}(X^{\nu,N},\mathbb{K})}{N} $$
exists. Note that  Betti number with $\K$-coefficients only depend on  $\cha(\K)$, not on the specific choice of the field $\K$. So without loss of generality, we only need to consider the case where $\K$ is algebraically closed.
In this subsection, we give a self-contained proof for the existence of the limit and relate these limits with Alexander modules.

\bp \label{prop betti number}
For any $i\geq 0$ the limit $  \lim_{N\rightarrow\infty}\frac{\dim H_{i}(X^{\nu,N},\mathbb{K})}{N}$ exists. Moreover, we have $$ \alpha_i(X^\nu,\K)=\rank H_i(X^\nu,\K)=\dim H_i(X, L_\rho)$$ where $\rho \in \K^*$ is general and  $L_\rho$ is the corresponding rank one $\K$-local system on $X$ pulling back by $\nu$. 
\ep

For the proof of this proposition, we need the following lemma.
\bl \label{lem linear algebra}
Given a nonzero polynomial $h(t)=t^d+\cdots+ a_{1}t+a_0\in\K[t]$, we denote by $J_N$ the following   $N\times N$ ($N\geq d$) matrix 
\[ J_N\coloneqq\begin{pmatrix}
0 & 1 & 0 & \cdots & 0 \\
0 & 0 & 1 & \cdots & 0 \\
\vdots & \vdots & \vdots & & \vdots \\
0 & 0 & 0 & \cdots & 1 \\
1 & 0 & 0 & \cdots & 0 \\
\end{pmatrix}. \]
 Then there exists a positive integer $c$ such  that for $N$ large enough we have
\[ N-c\leq \mathrm{rank} h(J_N)\leq N, \]
where $c$ does not depend on $N$.
\el
\begin{proof}
Set $g(t)=\gcd(t^{N}-1,h(t))$. Then there exist $u_{1}(t),u_{2}(t)\in\K[t]$ such that $$g(t)=u_{1}(t)(t^{N}-1)+u_{2}(t)h(t).$$ Note that $J_N^N=\mathrm{Id}$, where $\mathrm{Id}$ is the $(N\times N)$ identity matrix.
It gives that $$g(J_N)=u_{1}(J_N)(J_N^{N}-\mathrm{Id})+u_{2}(J_N)h(J_N)=u_{2}(J_N)h(J_N),$$ hence $\mathrm{rank} g(J_N)\leq \mathrm{rank} h(J_N)$. 
On the other hand, since $g(t)\mid h(t)$, we have $\mathrm{rank} g(J_N)\geq \mathrm{rank} h(J_N)$. Hence
\[ \mathrm{rank} g(J_N)=\mathrm{rank}  h(J_N). \]

There exists a positive integer $c$, which does not depend on $N$, such that $g( t)\mid t^{c }-1.$ 
In fact, since $\K$ is algebraically closed, we can write down $h(t)= \prod_{j=1}^d (t-\alpha_j).$ Collect all $\alpha_j$ such that there exists some positive integer $n_j$ with $\alpha_j^{n_j}=1$. When $\cha(\K)=0$, $c$ can be taken as the product of such $n_j$; while when $\cha(\K)=p>0$, $c$ can be taken as the product of such $n_j$ with some power of $p$. 

Note that for any field $\K$ the dimension of $\mathrm{ker}(J^{c}_N- \mathrm{Id})$ is $\gcd(c,N)$. 
Hence for $N$ large enough we have
\[ \mathrm{rank}h(J_N)=\mathrm{rank}g(J_N)\geq\mathrm{rank}(J^{c}_N- \mathrm{Id})\geq N-c \]
The other part of the inequality is obvious.
\end{proof}

\begin{proof}
[Proof of Proposition \ref{prop betti number}]
Recall that $C_{*}(X^{\nu}, \K)$ is the cellular chain complex of $X^\nu$ with the field coefficient $\K$. Each $C_{i}(X^\nu,\K)$ is a finitely generated free $\K[t^{\pm 1}]$-module, since $X$ is a finite CW complex. 
Note that $\mathbb{K}[t^{\pm 1}]$ is a PID. By choosing suitable basis we can assume that $\partial_{i}^{\K}$ has the following form
\[ \begin{pmatrix}
h_{1} & & & & \\
 & \ddots & & & \\
 & & h_{m_i} & & \\
 & &  & 0 & 
\end{pmatrix}\]
with $m_i=\rank \partial_i^\K$. In particular, $$\rank H_i(X^\nu, \K)= \rank C_i(X^\nu,\K)-m_i-m_{i-1}. $$

By tensoring with $\K[t^{\pm 1}]/(t^{N}-1)$, we get the cellular chain complex for $X^{\nu,N}$ with $\K$-coefficients. We use $\partial_{i,N}^{\K}$ to denote the corresponding differential map. Then
\[ \dim H_{i}(X^{\nu,N},\K)=\mathrm{rank}C_{i}(X^\nu,\K)\cdot N -\mathrm{rank}\partial_{i,N}^{\K}-\mathrm{rank}\partial_{i-1,N}^{\K}. \]
Note that each polynomial $h_{j}(t)$ ($1\leq j \leq m_i)$) becomes a $N\times N$ matrix $h_{j}(J_N)$ in $ \partial_{i,N}^{\K}$. By Lemma \ref{lem linear algebra}, there exist constant numbers $c_{i}$ and $c_{i-1}$ such that 
\[ \rank H_i(X^\nu, \K)\leq\frac{\dim H_{i}(X^{\nu,N},\K)}{N}\leq \rank H_i(X^\nu, \K)+\frac{c_{i}+c_{i-1}}{N}.\]
Taking $N\rightarrow\infty$, we get 
\[ \alpha_{i}(X^\nu,\K)=\lim_{N\to \infty}\frac{\dim H_{i}(X^\nu,\K)}{N}=\rank H_i(X^\nu, \K). \]
%In particular, $\frac{\dim H_{i}(X^\nu,\K)}{N}$ exists for any field coefficient $\K$. 

\medskip

 Let $\pi_{N}\colon X^{\nu,N}\rightarrow X$ denote the covering map. Then  $$H_{i}(X^{\nu,N},\K)=H_{i}(X,(\pi_{N})_*\K),$$ where $(\pi_{N})_*\K$ is the push forward of the $\K$-constant sheaf on $X^{\nu,N}$, hence a rank $N$ $\K$-local system. Since we have already known that $\lim_{N\rightarrow\infty}\frac{\dim H_{i}(X^\nu,\K)}{N}$ exists, one can compute this limit with the subsequence $\{N \in \Z_{>0}\mid\cha(\K)\nmid N\}$. In this case $(\pi_{N})_*\K$ is a direct sum of $N$ many rank one local systems. %Suppose $d_i=\dim H_{i}(X,L_{\lambda})$ for  $\lambda\in \K^*$ being general. 
 Since 
the function  $\K^* \to \Z $ by sending $\rho\in \K^*$ to $\dim H_{i}(X,L_{\rho})$ is semi-continuous,
  there are at most finitely many rank one  $\K$-local systems with larger homology dimension than the general one. In particular, these finitely many local systems only depends on the pair $(X,\nu)$, not on $N$. Hence we have
\[ \dim H_{i}(X,L_{\rho})\leq\frac{b_{i}(X^{\nu,N},\mathbb{K})}{N}\leq \dim H_{i}(X,L_{\rho})+\frac{c'}{N},\]
for some constant number $c'$ and $\rho \in \K^*$ being general. 
Taking $N\rightarrow\infty$, we are done.
\end{proof}

\subsubsection{Limits of torsion}
One can also study the following limit associated to  the order of the torsion part of $H_i(X^{\nu,N},\Z)$
$$\lim_{N\rightarrow\infty}\frac{\log|H_{i}(X^{\nu,N},\mathbb{Z})_{\mathrm{tor}}|}{N}.$$
%where $ |H_{i}(X^{N},\mathbb{Z})_{\mathrm{tor}}|$ denote the cardinality of the torsion part of $H_i(X^{\nu,N},\Z)$. 
This limit  can be computed by the Mahler measure of the Alexander polynomial $\Delta_i(X^\nu)$.
Let us  recall the definition of Mahler measure based on Jensen's formula \cite[p. 208]{Ahl66}.

\bd \label{def MM} \cite{Mah60,Mah62}
%Due to the Jensen formula,
For a non-zero polynomial $h(t)\in \Z[t]$, assume that \begin{center}
$h(t)=a_{d}t^{d}+a_{d-1}t^{d-1}+\cdots+a_{0}=a_{d}\cdot\prod_{j=1}^{d}(t-\alpha_{j})$ with $a_d\neq 0$ and $\alpha_j\in \C$.
\end{center} 
One can define the {\it Mahler measure} of $h$ to be
\[ \M(h)=\log(|a_{d}| \cdot \prod_{j=1}^{d}\max\{1,|\alpha_{j}|\}). \]
We say that  $h$ is of {\it cyclotomic  type}, if all its roots $ \alpha_j$ are roots of unity. In particular, in this case $\M(h)= \log|a_{d}|$.
\ed
%As we will see later, when $X$ is a complex smooth  quasi-projective variety, $\Delta_*(X^\nu)$ is always cyclotomic type. 
 % For any non-zero polynomial $F(t)\in \Z[t]$, we say that $F$ is cyc

%Note that the Mahler measure of the Alexander polynomial $\Delta_i(X^\nu)$ is well defined since the units of $\Z[t,t^{-1}]$ are the monomials $\pm t^k$ for some $k\in \Z$. %In particular, we can take $\M(\Delta_i(X^\nu)).$
%The following theorem is well known, e.g. see .
\begin{theorem}\cite[Theorem 5]{Le14}
With the above assumptions and notations, for any $i\geq 0$ the limit $\lim_{N\rightarrow\infty}\frac{\log|H_{i}(X^{\nu,N},\mathbb{Z})_{\mathrm{tor}}|}{N}$ exists and we have that
\[ \lim_{N\rightarrow\infty}\frac{\log|H_{i}(X^{\nu,N},\mathbb{Z})_{\mathrm{tor}}|}{N}=\M(\Delta_{i}(X^\nu)).\]
%where $\Delta_{i}(X^\nu)$ is the $i$-th Alexander polynomial of $(X,\nu)$ %and $\M(\Delta_{i}(X^\nu))$ is the Mahler measure of $\Delta_{i}(X^\nu)$.
\end{theorem}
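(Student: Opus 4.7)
The plan is to adapt the approach used in the proof of Proposition \ref{prop betti number}: put the boundary maps of $C_*(X^\nu,\Z)$ into a standard diagonal form, tensor with $\Z[t,t^{-1}]/(t^N-1)$, and track how each diagonal entry contributes to the torsion of $H_i(X^{\nu,N},\Z)$.

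First, I would work chain-level. The chain complex $C_*(X^\nu,\Z)$ is a bounded complex of finitely generated free $\Gamma_\Z$-modules. While $\Gamma_\Z$ is not a PID, after tensoring with $\Q$, $\Gamma_\Q=\Q[t,t^{-1}]$ is a PID, so we can choose $\Gamma_\Q$-bases in which each differential $\partial_i^\Q$ becomes diagonal with nonzero entries $h_{i,1},\ldots,h_{i,m_i}\in \Q[t,t^{-1}]$. By Proposition \ref{another def for Alexander polynomial} and the Smith normal form we have $\prod_{j} h_{i,j} \doteq \Delta_i(X^\nu)$ (up to units and rational scalars whose logarithm is $O(1)$, independent of $N$). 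Tensoring with $\Z[t,t^{-1}]/(t^N-1)$, each entry $h_{i,j}(t)$ becomes an $N\times N$ matrix $h_{i,j}(J_N)$, where $J_N$ is the cyclic shift of Lemma \ref{lem linear algebra}. The base-change matrices between the standard cellular basis and the diagonal $\Q$-basis contribute only an $O(1)$ error to $\log|H_i(X^{\nu,N},\Z)_{\tor}|$, so the asymptotic behavior of this quantity is determined by the $h_{i,j}(J_N)$ alone.

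Second, I would track the torsion contributions. Splitting $H_i(X^{\nu,N},\Z)$ via the image and kernel of the adjacent differentials, the torsion part is, up to an $O(1)$ multiplicative error, the product of the finite cokernels $\coker h_{i,j}(J_N)$ appearing in degree $i$ and $i-1$. When $h_{i,j}(J_N)$ has nonzero determinant, the order of the cokernel is exactly $|\det h_{i,j}(J_N)|$, and a classical computation gives
\[
\det h_{i,j}(J_N) \;=\; \prod_{\zeta^N=1} h_{i,j}(\zeta),
\]
the resultant of $h_{i,j}(t)$ with $t^N-1$. For those $N$ where the determinant vanishes, Lemma \ref{lem linear algebra} shows this can happen only in a bounded-codimension way, so one replaces the zero eigenvalues by a bounded extra contribution. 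Summing over $i$ and $j$, and comparing with the factorization $\prod_j h_{i,j}\doteq \Delta_i(X^\nu)$, reduces the theorem to showing
\begin{equation*}
\lim_{N\to\infty}\frac{1}{N}\log\Bigl|\prod_{\zeta^N=1} h(\zeta)\Bigr| \;=\; \M(h)
\end{equation*}
for every nonzero $h\in \Q[t,t^{-1}]$.

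The remaining step is a classical fact that I would invoke: writing $h(t)=a\prod_k(t-\alpha_k)$, one has $\tfrac{1}{N}\sum_{\zeta^N=1}\log|\zeta-\alpha_k|\to \max\{0,\log|\alpha_k|\}=\int_0^1\log|e^{2\pi i\theta}-\alpha_k|\,d\theta$, by Jensen's formula and a Riemann-sum argument. The main obstacle is that when some $\alpha_k$ lies exactly on the unit circle, the integrand has a logarithmic singularity and the naive Riemann-sum estimate fails; one must show that the $\zeta=e^{2\pi i k/N}$ cannot cluster too close to $\alpha_k$ on average. For $\alpha_k$ a root of unity this is elementary from the factorization $t^N-1$ itself, while for non-root-of-unity $\alpha_k$ on the circle it follows from a standard diophantine estimate (e.g.\ a quantitative version going back to Gelfond--Baker, as used in \cite{Le14}). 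Combining this with the previous reduction yields the claimed formula $\lim_N \tfrac{1}{N}\log|H_i(X^{\nu,N},\Z)_{\tor}|=\M(\Delta_i(X^\nu))$ and also its existence.
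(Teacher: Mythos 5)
The paper offers no proof of this theorem; it is quoted as a known result with a citation to \cite{Le14}, so your argument can only be judged on its own merits. On those merits it has a genuine gap at the very first reduction. You diagonalize the boundary maps over $\Q[t,t^{-1}]$ and assert that the base-change matrices ``contribute only an $O(1)$ error to $\log|H_i(X^{\nu,N},\Z)_{\tor}|$.'' This is false: a base-change matrix $P\in GL(\Q[t,t^{-1}])$ has $\det P=c\,t^k$ with $c\in\Q^*$ arbitrary, and after tensoring with $\Z[t,t^{-1}]/(t^N-1)$ its determinant becomes $\pm c^N$, so the multiplicative discrepancy it introduces into the torsion order is exponential in $N$ and contributes the nonzero constant $\log|c|$ to the limit. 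Equivalently, your relation $\prod_j h_{i,j}\doteq\Delta_i(X^\nu)$ only holds up to a rational scalar $c$, and since each $h_{i,j}$ is evaluated at all $N$ roots of unity, that scalar enters the final answer as $\log|c|$, not as an error term that vanishes after dividing by $N$. A minimal example: the complex $\Z[t^{\pm1}]\xrightarrow{\,2\,}\Z[t^{\pm1}]$ has $\Delta=2$, $\M(\Delta)=\log 2$, and torsion $(\Z/2)^N$ in the $N$-fold quotient, yet over $\Q[t^{\pm1}]$ the matrix $(2)$ diagonalizes to $(1)$, whose cokernels are trivial --- your reduction would predict limit $0$. The point is sharpened by Proposition \ref{torsion}: for the varieties of this paper $\Delta_i(X^\nu)$ is of cyclotomic type, so $\M(\Delta_i(X^\nu))$ equals \emph{exactly} the logarithm of the integer leading coefficient, i.e.\ precisely the information that rational diagonalization destroys.

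The later steps are sound in spirit --- $\det h(J_N)=\prod_{\zeta^N=1}h(\zeta)$, the identification of this with the resultant, and the convergence $\tfrac1N\sum_{\zeta^N=1}\log|h(\zeta)|\to\M(h)$ via Jensen's formula together with a Gelfond--Baker separation estimate for non-torsion roots on the unit circle are all correct and are indeed the analytic core of \cite{Le14}. What is missing is an \emph{integral} reduction of $|H_i(X^{\nu,N},\Z)_{\tor}|$ to such resultants. Since $\Z[t,t^{-1}]$ is not a PID, one cannot diagonalize the differentials; the standard route (and the one behind Le's proof) is to express the order of the torsion subgroup of an abelian group presented by an integer matrix as the gcd of its maximal minors (the product of elementary divisors) and to compare the maximal minors of $\partial_{i,N}^{\Z}$ with resultants of the $r\times r$ minors of $\partial_i^{\Z}$ against $t^N-1$, which is where Proposition \ref{another def for Alexander polynomial} genuinely enters. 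Your proposal needs this integral bookkeeping to be carried out; as written, the passage through a $\Q$-basis cannot produce the leading-coefficient term of the Mahler measure.
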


Based on the above theorem, we set
 $$ \M_i(X^\nu) \coloneqq \lim_{N\rightarrow\infty}\frac{\log|H_{i}(X^{\nu,N},\mathbb{Z})_{\mathrm{tor}}|}{N}=\M(\Delta_{i}(X^\nu)).$$

\subsubsection{Universal Coefficients Theorem}
By the Universal Coefficients Theorem, $$\dim H_i(X,\C) \leq \dim H_i(X,\K)   $$
for any field coefficient $\K$. Moreover, if $\cha(\K)=p>0$ and the inequality is strict, then either $H_i(X,\Z)$ or $H_{i-1}(X,\Z)$ has non-trivial $p$-torsion. On the other hand, if $H_i(X,\Z) $ has non-trivial $p$-torsion, then 
\begin{center}
$\dim H_i(X,\C) < \dim H_i(X,\p) $ and $\dim H_{i+1}(X,\C) < \dim H_{i+1}(X,\p) $
\end{center}
%A similar claim holds for the $L^2$-type invariants.

\bp \label{prop large enough}
Let $X$ be a connected finite CW-complex with a fixed group epimorphism $\nu\colon \pi_{1}(X)\twoheadrightarrow\mathbb{Z}$. Then we have  $\alpha_i(X^\nu,\C)\leq \alpha_i(X^\nu,\bar{\mathbb{F}}_p)$. Moreover,
if  $\alpha_i(X^\nu,\C)<\alpha_i(X^\nu,\bar{\mathbb{F}}_p)$, then we have the following
\begin{itemize}
\item[(1)] For $N$ large enough either $H_i(X^{\nu,N},\Z)$ or $H_{i-1}(X^{\nu,N},\Z)$ has non-trivial $p$-torsion.
\item[(2)] $ \Delta_i(X^\nu)=0 $ or $ \Delta_{i-1}(X^\nu)=0 $  in $\p[t,t^{-1}]$, hence $\M_i(X^\nu)>0$ or $\M_{i-1}(X^\nu)>0$, respectively.
\end{itemize} 
\ep
\begin{proof}
(1) Set $\epsilon=|\alpha_i(X,\C)-\alpha_i(X,\bar{\mathbb{F}}_p)|/2$. For $N$ large enough, we have
\[ \frac{\dim_{\C}H_i(X^{\nu,N},\C)}{N}<\alpha_i(X^\nu,\C)+\epsilon=\alpha_i(X^\nu,\bar{\mathbb{F}}_p)-\epsilon<\frac{\dim_{\bar{\mathbb{F}}_p}H_i(X^{\nu,N},\bar{\mathbb{F}}_p)}{N}. \]
Hence $\dim_{\C} H_i(X^{\nu,N},\C) < \dim_{\bar{\mathbb{F}}_p} H_i(X^{\nu,N},\bar{\mathbb{F}}_p)$ and the conclusion follows.

(2)  $\alpha_i(X^\nu,\C)<\alpha_i(X^\nu,\bar{\mathbb{F}}_p)$ is equivalent to
\[\mathrm{rank}C_i(X^\nu,\C)-\mathrm{rank}\partial_i^\C-\mathrm{rank}\partial_{i-1}^\C<\mathrm{rank}C_i(X^\nu,\bar{\mathbb{F}}_p)-\mathrm{rank}\partial_i^{\bar{\mathbb{F}}_p}-\mathrm{rank}\partial_{i-1}^{\bar{\mathbb{F}}_p},\]
which implies either $\mathrm{rank}\partial_i^\Z=\mathrm{rank}\partial_i^\C> \mathrm{rank}\partial_i^{\bar{\mathbb{F}}_p}$
or the same inequality for degree $i-1$.
Note that $\mathrm{rank}\partial_i^\Z> \mathrm{rank}\partial_i^{\bar{\mathbb{F}}_p}$ if and only if $p\mid \Delta_i(X^\nu)$ (i.e., $  \Delta_i(X^\nu)=0 $ in $\p[t,t^{-1}]$). 
In particular, $\M_i(X^\nu)\geq \log p$. Then the claim follows.
\end{proof}

\br \label{rem uct} If $\Delta_i(X^\nu) $ is of cyclotomic type and $\M_i(X^\nu) \neq 0$, then there exists a prime number $p$ such that $\Delta_i(X^\nu)=0$ in $\p[t,t^{-1}]$, hence 
 \begin{center}
 $\alpha_i(X^\nu,\C)<\alpha_i(X^\nu,\p)$ and $\alpha_{i+1}(X^\nu,\C)<\alpha_{i+1}(X^\nu,\p)$.
\end{center}  
\er

\subsection{Aomoto complex}
Given an epimorphism $\nu\colon\pi_{1}(X)\rightarrow\Z$, we denote $\nu_{\Z}$ the corresponding element in $H^{1}(X,\Z)$. By obstruction theory, there is a map $g\colon X\rightarrow S^{1}$ and a class $\omega\in H^{1}(S^{1},\Z)$ such that $\nu_{\Z}=g^{\ast}(\omega)$. Hence we have
\[ \nu_{\Z}\cup \nu_{\Z}=f^{\ast}(\omega\cup\omega)=0. \]
For any field $\K$, there is a corresponding class $\nu_{\K}\in H^{1}(X,\K)$ and we have $\nu_{\K}\cup \nu_{\K}=0$.
Then one gets the following  two Aomoto complexes by cup product
\[ \xymatrix{
(H^{\ast}(X,  \Z) , \cdot\nu_{\Z})\colon & H^{0}(X,\Z)\ar[r]^{\nu_{\Z}} & H^{1}(X,\Z)\ar[r]^{\nu_{\Z}} & H^{2}(X,\Z)\ar[r] & \cdots
} \]
and
\[ \xymatrix{
(H^{\ast}(X,\K),\cdot \nu_{\K})\colon & H^{0}(X,\K)\ar[r]^{\nu_{\K}} & H^{1}(X,\K)\ar[r]^{\nu_{\K}} & H^{2}(X,\K)\ar[r] & \cdots
} \]
%These are the Aomoto complex of $H^{\ast}(X,\Z)$ and $H^{\ast}(X,\K)$ with respect to $\nu_\Z$ and $\nu_{\K}$, respectively. 
\bd 
With the above assumptions and notations, we define {\it  the $i$-th Aomoto Betti number with $\K$-coefficients} as
\[ \beta_{i}(X,\nu_{\K})\coloneqq\dim_{\K}H^{i}(H^{\ast}(X,\K), \cdot\nu_{\K}). \]
and {\it  the $i$-th Aomoto torsion number} as
\[ \tau_i(X,\nu_\Z)\coloneqq | H^{i+1}(H^*(X,\Z), \cdot\nu_\Z)_{\tor}|. \]
%where %$\Tor H^{i+1}(H^*(X,\Z), \cdot\nu_\Z)$ denote the torsion part of $H^{i+1}(H^*(X,\Z), \cdot\nu_\Z)$ and 
Here the shift by 1 is due to the Universal Coefficient Theorem.
\ed

The following nice theorem due to Papadima-Suciu \cite{PS10} gives the relation between $\alpha_i(X^\nu,\K)$ and $\beta_i(X,\nu_\K)$.
In fact, they constructed a spectral sequence converges to $H_*(X^\nu,\K)$ (\cite[Proposition 8.1]{PS10}) with the first page being the Aomoto complex \cite[Corollary 8.3]{PS10}.
\bt  \label{PS} Given the pair $(X,\nu)$ and any field $\K$, we have that
\[ \alpha_i(X^\nu,\K)\leq \beta_i(X,\nu_\K). \]
\et

Next we recall the following theorem due to Papadima and Suciu \cite[Theorem 12.6]{PS10} specialised to our case.
\begin{theorem} \label{thm PS minimal}
Let $X$ be a connected finite minimal CW-complex with a fixed epimorphism $\nu\colon \pi_1(X)\twoheadrightarrow \Z$. Then the linearization of the equivariant cochain complex of the cover  $X^\nu$, with coefficients in $ \Z$ or a field $\K$, coincides with the Aomoto complex  $(H^{\ast}(X, \Z), \cdot \nu_\Z)$ or $(H^{\ast}(X, \K), \cdot \nu_\K)$, respectively.
\end{theorem}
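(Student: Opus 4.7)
The plan is to exploit the minimality hypothesis to kill the leading-order part of the equivariant cochain differential, and then to identify the first-order correction with cup product by $\nu$.

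First I would write down the equivariant cellular cochain complex $C^*(X^\nu, R)$ for $R = \Z$ or $R = \K$. By covering space theory this is a complex of free $\Gamma_R = R[t, t^{\pm 1}]$-modules, one generator per cell of $X$, with coboundaries $d^i$ representable as matrices over $\Gamma_R$. Performing the substitution $t = 1 + s$ (equivalently, filtering by powers of the augmentation ideal $(t-1)$), each entry admits an expansion $d^i = d^i_0 + d^i_1 s + d^i_2 s^2 + \cdots$. Setting $s = 0$ recovers the ordinary cellular cochain complex of $X$, so $d^i_0$ is the cellular coboundary of $C^*(X, R)$.

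Next I would invoke minimality. For a minimal CW complex the number of $i$-cells equals $b_i(X)$, which forces the cellular coboundaries on $C^*(X, R)$ to vanish after a suitable change of basis (one handles the integral case by the universal coefficient theorem, using that there is no integer torsion in the minimal complex's boundary matrices modulo sign). Hence $d^i_0 = 0$, the leading term of $d^i$ is $d^i_1 s$, and the identity $d \circ d = 0$ forces $d_1 \circ d_1 = 0$. The linearization of the equivariant complex is therefore the chain complex $(H^*(X, R), d_1)$.

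The crux of the argument is the identification $d_1 = \cdot\, \nu_R$, which I would establish by naturality. The class $\nu$ is represented by a map $f \colon X \to S^1 = K(\Z, 1)$, and $X^\nu$ is the pullback $f^* \mathbb{R}$. Equipping $S^1$ with its minimal CW structure (one $0$-cell, one $1$-cell), the equivariant cochain complex of $\mathbb{R}$ is simply $\Gamma_R \xrightarrow{\,t-1\,} \Gamma_R$, whose linearization is multiplication by $s$, matching cup product with the generator $\omega \in H^1(S^1, R)$. Pulling back along a cellular approximation of $f$ compatible with a chosen minimal CW structure on $X$, and invoking naturality of both the equivariant cochain complex and of the cup product, transports this identification to $X$ and yields $d_1 = \cdot\, f^*(\omega) = \cdot\, \nu_R$.

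The main obstacle is precisely this last step: on a general minimal CW complex the higher-order terms $d^i_k$ for $k \geq 2$ correspond to higher Massey-type products of $\nu$ and genuinely depend on more than the cohomology ring, so a chain-level identification of $d_1$ with cup product is not automatic. The naturality route through $S^1$ sidesteps the need to write down a chain-level cup product on $X$ directly, but demands a careful choice of compatible cellular approximations; alternatively one could deduce the statement from a general equivariant spectral sequence whose $E_1$-page is, by construction, the Aomoto complex, which is the route taken in the original reference.
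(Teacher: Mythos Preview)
The paper does not supply a proof of this statement: it is quoted from Papadima--Suciu \cite[Theorem 12.6]{PS10} and used as a black box. There is thus no in-paper argument to compare your proposal against; the relevant benchmark is the argument in \cite{PS10}.

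Your outline has the right architecture---the $(t-1)$-adic expansion of the equivariant differential, the vanishing of $d_0$ forced by minimality, and the resulting chain complex $(H^*(X,R), d_1)$---and you correctly isolate the identification $d_1 = {\cdot}\,\nu_R$ as the crux. The naturality-through-$S^1$ route you sketch is, however, more delicate than it looks: a cellular approximation of $f\colon X \to S^1$ will in general not be compatible with a given minimal CW structure on $X$, and replacing the CW structure to make $f$ cellular may destroy minimality, so one cannot simply transport the one-variable computation along $f^*$. Papadima--Suciu do not proceed this way; they carry out a direct chain-level analysis within their spectral-sequence framework, showing that for a minimal complex the coefficient of $(t-1)$ in the equivariant coboundary computes cup product with the class in $H^1$ determined by the cover. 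Your closing sentence already points toward this, so your diagnosis of where the real work lies is accurate; what your sketch is missing is the concrete mechanism that replaces the naturality shortcut.
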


As an application of this theorem, we have the following result. 

\bp \label{prop minimal}
If $X$ is a connected finite minimal CW complex  with a fixed epimorphism $\nu\colon \pi_1(X)\twoheadrightarrow \Z$ such that $\alpha_i(X^\nu,\C)=\beta_i(X,\nu_\C)$, then we have
\begin{center}
$\Delta'_i(X^\nu)(1)\neq 0$ and $ \Delta'_i(X^\nu)(1)\mid \tau_i(X, \nu_\Z), $
\end{center} 
where $ \Delta'_i(X^\nu)$ is obtained from $\Delta_i(X^\nu)$ by taking out $t-1$ factors. The same conclusion  also holds at homological degree $i-1$. %$\Delta{i-1}(X^\nu)$ also holds.
\ep
\begin{proof}
When $X$ is a minimal CW complex, up to homotopy and under suitable basis we can assume $\partial_i^\Z(1)=0$ and
write $\partial_i^\Z$ in matrix form $(t-1)A_i$. Under the same basis, the above theorem shows that the map $H^i(X,\Z)\overset{\nu_\Z}{\to} H^{i+1}(X,\Z)$ can be identified with $A_i(1)$ (up to dual operation). 
Note that
$$\alpha_i(X^\nu,\C)=b_i(X)-\rank \partial_i^\Z -\rank\partial_{i-1}^\Z $$
and
$$\beta_i(X,\nu_\C)=b_i(X)-\rank A_i(1) -\rank A_{i-1}(1) .$$
Since $\rank \partial_i^\Z=\rank (t-1)A_i= \rank A_i$,  $\alpha_i(X^\nu,\C)=\beta_i(X,\nu_\C)$ 
implies
\begin{center}
 $ \rank A_i=\rank A_i(1)$ and $ \rank A_{i-1}=\rank A_{i-1}(1)$.
\end{center}
 Let  $r_i$ denote the rank of $A_i$. 
For a $(r_i\times r_i)$-minor of $A_i$, say $g$, it may happen $g(1)=0$. Hence $\Delta( A_i)$ taking values at 1 divides the greatest common divisor of all possible  $(r_i\times r_i)$-minors of $A_i(1)$, which coincides with $\tau_i(X, \nu_\Z).$ In particular, $\Delta(A_i)$ taking value at 1 is not 0.  Then the conclusion follows by noticing that $\Delta(A_i)=\Delta'_i(X^\nu)$. The claim for homological degree $i-1$ follows by the same argument.
\end{proof}

Hyperplane arrangement complement satisfies the assumptions in the above proposition. For more details, see Section \ref{sec hyperlane arr}.

\section{Cohomology jump loci and orbifold groups}

\subsection{Cohomology jump loci} \label{sec 3.1}
Let $X$ be a connected finite CW-complex with $\pi_{1}(X)=G$  and $\K$ be an algebraically closed field, e.g. $\C$, $\p$. The group of $\K$-valued  characters, $ \homo(G,\K^*)$, is the  moduli space of  rank one  $\K$-local system on $X$.
The cohomology jump loci $\sV^i_k(X,\K)$ of $X$ are defined as in Definition \ref{def jump loci}.
Cohomology jump loci  are closed sub-varieties of $\homo(G,\K^*)$ and homotopy invariants of $X$. 
For cohomological degree one, $\sV^1_k(X,\K)$  depends only on $\pi_1(X)$ (e.g. see \cite[Section 2.2]{Suc11}). So for any finitely presented group $G$, $\sV^1_k(G,\K)$ is well defined.

\br  \label{homology vs cohomology}
One can also define the homology jump loci of $X$ as follows
$$\mathcal{W}_i^k(X,\K)\coloneqq\lbrace \rho\in \homo(G,\K^*) \mid \dim_{\K} H_{i}(X, L_{\rho})\geq k \rbrace.$$ 
Let $\rho^{-1}$ denote the inverse of $\rho$ in   $\Hom(G,\K^*)$. Then  we have the following isomorphism between $\K$-vector spaces \cite[p. 50]{Dim04} 
$$ H^{i}(X, L_{\rho^{-1}}) \cong \Hom_{\K}(H_{i}(X,L_{\rho}),\K),$$  which gives  $$\V^{i}_k(X,\K)=\lbrace \rho\in  \Hom(G,\K^*) \mid \rho^{-1} \in \mathcal{W}_{i}^k(X,\K)\rbrace.$$
So $\V^i_k(X,\K)$ and $\mathcal{W}^k_i(X,\K)$ share the same information. 
\er

%We include here the following elementary examples.
\br \label{degree 0}   If $X$ is a finite connected CW complex, then  $\V_1^0(X, \K) = \{\K_X\}$ consists of just one point, the trivial rank-one local system,  and $\V^0_k(X, \K) = \emptyset$ for $k>1$.
\er

Chomology jump loci can be viewed as  generalizations of $\alpha_*(X^\nu,\K)$.   In fact, consider an epimorphism $\nu\colon \pi_1(X) \to \Z$. It induces an embedding
$$\nu^*\colon \K^*\hookrightarrow\Hom(G,\K^*) .$$
By Remark \ref{homology vs cohomology} and Proposition \ref{prop betti number}, it is easy to see that $\alpha_i(X^\nu,\K)=k$ if and only if \begin{center}
 $\im \nu^* \subseteq \sV^i_k(X,\K) $ and $\im \nu^* \nsubseteq \sV^i_{k+1}(X,\K) $.
\end{center}
The corresponding generalizations of $\beta_*(X,\nu_\K)$ are the resonance varieties. For its definition and properties, see e.g. \cite{Suc11}.

\medskip

The following structure theorem for cohomology jump loci of complex smooth quasi-projective varieties  are contributed by many people and we name a few here: Green and Lazarsfeld \cite{GL91}, Simpson \cite{Sim93}, Arapura \cite{Ara97}, Dimca and Papadima \cite{DP14}, etc. It is finalized by Budur and Wang in \cite{BW15,BW20}.
\bt \label{structure theorem} \cite{BW15,BW20} If $X$ is a complex smooth quasi-projective variety, then $ \sV^i_k(X,\C)$ is a finite union of torsion translated sub-tori of $\Hom(G,\C^*)$.
\et

The following question abut the cohomology jump loci for arbitrary field coefficients are still open \cite[Section 3.5]{Suc01}:
\begin{que}
Does the structure theorem  hold for $ \sV^i_k(X,\K)$ of  a complex smooth quasi-projective variety $X$ with $\cha(\K)>0$?
\end{que}

  Theorem \ref{structure theorem} implies the following property for the Alexander polynomial associated to the pair $(X,\nu)$. 
\begin{prop} \label{torsion} Let $X$ be a complex smooth quasi-projective variety. For any epimorphism $\nu\colon \pi_1(X)\twoheadrightarrow \Z$, $\Delta_i(X,\nu)$ is of cyclotomic  type for any $i\geq 0$. Hence $\exp(\M_i(X^\nu))$ is a positive integer.
\end{prop}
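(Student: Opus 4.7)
My plan is to show every root of $\Delta_i(X^\nu)$ in $\C^*$ is a root of unity, by locating such roots inside the cohomology jump loci of $X$ and invoking Theorem \ref{structure theorem} together with an elementary intersection argument for subtori. As a first step, I would relate roots of $\Delta_i(X^\nu)$ to jumps of $\dim H_i(X, L_\rho)$. Since $\C[t^{\pm 1}]$ is a PID, the Alexander module $H_i(X^\nu, \C) = H_i(X^\nu, \Z) \otimes_\Z \C$ splits as a free part plus a torsion part, and the $\C^*$-roots of $\Delta_i(X^\nu)$ are precisely the support of the latter. From the universal coefficient exact sequence
\begin{equation*}
0 \to H_i(X^\nu,\C) \otimes_{\C[t^{\pm 1}]} \C_\rho \to H_i(X, L_\rho) \to \Tor_1^{\C[t^{\pm 1}]}(H_{i-1}(X^\nu, \C), \C_\rho) \to 0,
\end{equation*}
with $\C_\rho := \C[t^{\pm 1}]/(t-\rho)$, combined with Proposition \ref{prop betti number}, I conclude that every such root $\rho$ satisfies $\dim H_i(X,L_\rho) > d_i$, where $d_i$ denotes the generic value of $\dim H_i(X, L_{\rho'})$ along $\nu^*(\C^*)$. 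Using the duality from Remark \ref{homology vs cohomology}, this places $\nu^*(\rho^{-1})$ in $\sV^i_{d_i+1}(X,\C)$.

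The second step applies Theorem \ref{structure theorem}: $\sV^i_{d_i+1}(X,\C) = \bigcup_j \xi_j T_j'$ is a finite union of torsion-translated subtori, and by the choice of $d_i$ none of them contains $\nu^*(\C^*)$. For each $j$, the intersection $\nu^*(\C^*) \cap \xi_j T_j'$ is then finite: if $\nu^*(\C^*) \not\subset T_j'$ it is a coset of the finite subgroup $\nu^*(\C^*) \cap T_j' \subset \C^*$, while if $\nu^*(\C^*) \subset T_j'$ then $\xi_j \notin T_j'$ (lest $\nu^*(\C^*) \subset \xi_j T_j'$, a contradiction), so the intersection is empty. Writing an element of the intersection as $s = \xi_j t'$ with $t' \in T_j'$ and letting $N$ be the order of $\xi_j$, the element $s^N = (t')^N$ lies in the finite group $\nu^*(\C^*) \cap T_j'$ and is hence a root of unity, so $s$ itself is a root of unity. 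Therefore $\rho^{-1}$, and with it $\rho$, is a root of unity for every $\C^*$-root $\rho$ of $\Delta_i(X^\nu)$, so $\Delta_i(X^\nu)$ is of cyclotomic type.

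The final assertion $\exp \M_i(X^\nu) \in \Z_{>0}$ then follows from the definition of the Mahler measure: for a cyclotomic-type polynomial normalised as in Definition \ref{def Alexander}, one has $\M(\Delta_i(X^\nu)) = \log a_d$ with $a_d \in \Z_{>0}$ the leading coefficient. The main technical obstacle I anticipate is the first step: the $\Tor_1$ contribution from $H_{i-1}(X^\nu, \C)$ sits in the same exact-sequence degree as the root-detecting quotient, so one must verify that a root of $\Delta_i(X^\nu)$ genuinely forces the leftmost term to jump. This is handled by observing that the support of $H_i(X^\nu, \C)_{\tor}$ is detected by the leftmost term alone; choosing $d_i$ as the generic value of $\dim H_i(X, L_{\rho'})$ (which coincides with $\rank H_i(X^\nu,\C)$ by Proposition \ref{prop betti number}) then makes the inequality $\dim H_i(X, L_\rho) > d_i$ unambiguous, and the remaining subtorus-intersection step becomes essentially combinatorial.
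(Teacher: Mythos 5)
Your argument is correct and follows essentially the same route as the paper: the paper's one-line proof cites \cite[Proposition 1.4]{BLW18} for the fact that the structure theorem forces all roots of $\Delta_i(X^\nu,\C)$ to be roots of unity, and your universal-coefficient plus torsion-translated-subtorus argument is a valid self-contained proof of exactly that cited step. In particular, your worry about the $\Tor_1$ term is unfounded in the direction you need---it can only increase $\dim H_i(X,L_\rho)$, so the injectivity of $H_i(X^\nu,\C)\otimes\C_\rho\to H_i(X,L_\rho)$ already yields the strict inequality $\dim H_i(X,L_\rho)>d_i$ at any root $\rho$ of $\Delta_i(X^\nu)$.
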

\begin{proof}
 By \cite[Proposition 1.4]{BLW18},  the structure theorem for cohomology jump loci implies that all the roots of $\Delta_i(X^\nu,\C)$ are  roots of unity. Since $\Delta_i(X^\nu,\C)$ and $\Delta_i(X^\nu)$ only differ by multiplication with a non-zero constant integer, the claim follows.
\end{proof}

\br
Note that $\exp(\M_*(X^\nu))$  is not always an integer for general $X$. For example,  the integral
Alexander polynomial of the complement $X$  of the knot $4_1$ in $S^3$ \cite[p. 981]{SW1} is given by
$$\Delta_1(X^\nu)=t^2-3t+1, $$
 hence  $\exp(\M_1(X^\nu))= \frac{3+\sqrt{5}}{2}$.
\er

\subsection{Orbifold Groups} \label{section orbifold groups}
In this subsection, we compute the degree 1 cohomology jump loci and $L^2$-type invariants for orbifold groups.

Let $\Sigma_{g,r}$ be a Riemann surface of genus $g\geq 0$ and with $r\geq 0$ points removed.  To have $b_1(X)>0$,  we always assume $X\neq \mathbb{CP}^1, \C$.
%The cohomology jump loci of $\Sigma_{g,r}$ are already computed in Example \ref{ex curve}. More generally,   
 Consider  $\Sigma_{g,r}$ with $s$ marked points $\{q_1, \ldots, q_s\}$ and a weight vector ${\bm \mu}=(\mu_1,\cdots,\mu_s)$, where $\mu_i \in \Z_{>0}$, as in \cite{Dim07,ACM13,Suc14}.  The orbifold group $\Gamma=\pi_1^{\orb}(\Sigma_{g,r},{\bm \mu})$ associated to these data is  defined as
\[ \Gamma=\pi_{1}^{\orb}(\Sigma_{g,r},{\bm \mu})\coloneqq \pi_{1}(\Sigma_{g,r}\backslash \{q_{1}, \ldots, q_{s}\})/ \langle \gamma_{j}^{\mu_{j}}=1 \text{ for all } 1\leq j \leq s\rangle, \]
where $\gamma_{j}$ is a meridian of $q_{j}$. 
If $\mu_j=1$ for some $j$, we get the same orbifold group by omitting $q_j$ and $\mu_j$. So without loss of generality, we assume $\mu_j>1$ for all $1\leq j \leq s$.

 %is a quasi-projective Riemann surface $\Sigma_{g,r}$ with marked points $q_1, \cdots, q_s$ 

%with a function $\mu\colon\Sigma_{g,r}\rightarrow\mathbb{N}$ taking the value 1 outside a finite number of points.

%For an orbifold $(\Sigma_{g,r},\mu)$, let $p_{1},\ldots,p_{s}\in \Sigma_{g,r}$ be the points such that $\mu_{j}\colon=\mu(p_{j})>1$. Then the orbifold fundamental group of $\Sigma_{g,r}$ is 

\subsubsection{Non-compact case}
If $\Sigma_{g,r}$ is not compact (i.e., $r>0$), then $\pi_{1}(\Sigma_{g,r})$ is a free group with rank $n=2g+r-1$, hence
$ \Gamma\cong F_{n}\ast\mathbb{Z}_{\mu_{1}}\cdots\ast\mathbb{Z}_{\mu_{s}}. $
Then we have
$$\mathrm{Hom}(\Gamma,\K^*)\coloneqq \{(t_1, \ldots, t_n,\lambda_1,\ldots,\lambda_s)\in(\K^*)^{n+s}|\lambda_j^{\mu_j}=1\ \text{ for all }\ 1\leq j\leq s\}.$$ 

When $\cha(\K)=0$, we have the following short exact sequence as in \cite[Proposition 2.7]{ACM13}:
$$ 1\to (\K^*)^{n} \to \mathrm{Hom}(\Gamma,\K^*) \to \oplus_{j=1}^s C_{\mu_j}\to 1,$$
where $C_{\mu_j}$ is the cyclic multiplicative subgroup of $\K^*$ with order $\mu_j$. Hence $\mathrm{Hom}(\Gamma,\K^{*})$ has $\prod_{j=1}^s \mu_j$ connected components, with every connected component isomorphic to $(\K^*)^{n} $. 

On the other hand, when $\cha(\K)=p>0$, let $\mu'_j$ be the largest positive integer such that $\mu'_j \mid \mu_j$ ($\mu'_j$ could be 1) and $\gcd(p,\mu'_j)=1 $. 
%Since  $\mu'_j$ could be 1, we renumber the indexes if necessary such that $\mu'_j>1$ exactly for $1\leq j \leq s'$. 
Then we have the following short exact sequence:
$$ 1\to (\K^*)^{n} \to \mathrm{Hom}(\Gamma,\K^{*}) \to \oplus_{j=1}^{s} C_{\mu'_j}\to 1,$$
where $C_{\mu'_j}$ is the cyclic multiplicative subgroup of $\K^*$ with order $\mu'_j$. Hence $\mathrm{Hom}(\Gamma,\K^{*})$ has $\prod_{j=1}^{s} \mu'_j$ connected components, with every connected component isomorphic to $(\K^*)^{n} $. In particular, if $ \mu_j$ is a power of $p$ for all $1\leq j\leq s $ (i.e., $\prod_{j=1}^{s}\mu'_j=1$), $ \mathrm{Hom}(\Gamma,\K^{*})\cong (\K^*)^{n}. $

%To unify the notations, we set $\mu'_j=\mu_j$  when $\cha(\K)=0$.  
In both cases,  for any $\rho=(t_1, \ldots, t_n,\lambda_1,\ldots,\lambda_s) \in \Hom(\Gamma,\K^{*})$, we set $\ell_\K(\rho,{\bm \mu})$ as the number of trivial coordinates $\lambda_j=1$ such that $\cha(\K)=p\nmid \mu_j$. When $\rho$ is the trivial character, we simply write $\ell_\K({\bm \mu}). $ 
%If $\prod_{j=1}^{s}\mu'_j=1$, then $ \ell(\rho, {\bm \mu})=0$ by convention.

%with $p\mid \mu_j$. %Let $t$ be the number of $\mu_i$ with $\mathrm{char}\K\mid \mu_i$.

\bp \label{prop non-compact}
With the above notations,  we have $$\dim H^1(\Gamma, L_\rho)=\begin{cases}
n+s-\ell_\K(\rho,{\bm \mu})-1, & \mathrm{if}\ L_\rho\neq \K_\Gamma, \\
n+s-\ell_\K({\bm \mu}), & \mathrm{if}\ L_\rho =\K_\Gamma.
\end{cases} $$ 
Here $\K_\Gamma$ denotes the trivial character on $\Gamma$. Let $\Hom(\Gamma,\K^*)^0$ denote the connected component of $\Hom(\Gamma,\K^*) $, which contains  $\K_\Gamma$. Then we have
\[
\sV^1(\Gamma, \K)  = 
\begin{cases}
\Hom(\Gamma,\K^*),   &  \text{if } n > 1, \text{ or } n = 1 \text{ and } p \\
                   & \text{divides some } \mu_j, \\[5pt]
(\Hom(\Gamma,\K^*)\setminus \Hom(\Gamma,\K^*)^0) \cup \{\K_\Gamma\},   &  \text{if } n = 1, \prod_{j=1}^{s}\mu'_j > 1, \\
                   & \text{and } p \text{ does not divide any } \mu_j, \\[5pt]
\{\K_\Gamma\},             &  \text{if } n = 1, \prod_{j=1}^{s}\mu'_j= 1, \\
                   & \text{and } p \text{ does not divide any } \mu_j.
\end{cases}
\]
\ep
\begin{proof} Consider the Eilenberg–MacLane space $K(\pi_1,1)$ of $\Gamma$, say $Y$. The homology group $H_1(\Gamma, L_\rho)$ can be computed by the chain complex $C_*(Y, L_{\rho})$, see \cite[p. 50]{Dim04}. By Fox calculus    $\partial_1^{\rho}$ has the form of a $(n+s)\times s$ matrix as follows
\be \label{matrix non-compact} \begin{pmatrix}
\frac{\lambda_{1}^{\mu_{1}}-1}{\lambda_{1}-1} & 0 & \cdots & 0 \\
0 & \frac{\lambda_{2}^{\mu_{2}}-1}{\lambda_{2}-1} & \cdots & 0 \\
\vdots & \vdots & \vdots & \vdots \\
0 & 0 & \cdots & \frac{\lambda_{s}^{\mu_{s}}-1}{\lambda_{s}-1} \\
0 & 0 & \cdots & 0 \\
\vdots & \vdots & \vdots & \vdots \\
0 & 0 & \cdots & 0\\
\end{pmatrix}. \ee
Note that for any $1\leq j \leq s$, $\frac{\lambda_{j}^{\mu_{j}}-1}{\lambda_{j}-1}\neq 0$ in $\K $ if and only if $\lambda_j= 1$ and $p\nmid \mu_j$. 
Then we see that the above matrix has rank $\ell_\K(\rho, {\bm \mu})$.  If $L_\rho$ is not the constant sheaf, then the rank of $\partial_1^{\rho}$ is 1, hence
\[ \dim H_1(Y, L_{\rho})=n+s-\ell_\K(\rho,{\bm \mu})-1. \]
 On the other hand, if $L_\rho$ is the constant sheaf, then the rank of $\partial_1^{\rho}$ is 0, hence
\[ \dim H_1(Y, \K)=n+s-\ell_\K({\bm \mu}). \] Then the claim follows from Remark \ref{homology vs cohomology}.
\end{proof}

\subsubsection{Compact case} 
If $\Sigma_{g,r}$ is compact (i.e., $r=0$), then
\[ \Gamma\cong \langle x_{1},\ldots,x_{g},y_{1},\ldots,y_{g},\gamma_{1},\ldots,\gamma_{s}|\prod_{i=1}^{g}[x_{i},y_{i}]\prod_{j=1}^{s} \gamma_{j}=1,\gamma_{j}^{\mu_{j}}=1 \text{ for all } 1\leq j\leq s\rangle, \]
hence $$\Hom(\Gamma,\K^*)=\{(t_1, \ldots, t_{2g},\lambda_1,\ldots,\lambda_s)\in(\K^{*})^{2g+s}|\lambda_1\cdots\lambda_s=1, \lambda_j^{\mu_j}=1\ \mathrm{for}\ 1\leq j\leq s\}.$$

As in the non-compact case, when $\cha(\K)=p>0$, we set $\mu'_j$ to be the largest positive integer such that $\mu'_j \mid \mu_j$ and $\gcd(p,\mu'_j)=1 $. %If  $ \mu_j$ is a power of $p$ for all $1\leq j\leq s $ (i.e., $\prod_{j=1}^{s}\mu'_j=1$), $ \mathrm{Hom}(\Gamma,\K^{\times})\cong (\K^\times)^{2g}. $ On the other hand, if 
 %$\prod_{j=1}^{s}\mu'_j>1$, we renumber the indexes if necessary such that $\mu'_j>1$ exactly for $1\leq j \leq s'$. 
%Note that $s'\geq 2$ in this case. 
To unify the notations, we set $\mu'_j=\mu_j$ when $\cha(\K)=0$. 
 Then in both cases, we have the following short exact sequence:
$$ 1\to (\K^*)^{2g} \to \mathrm{Hom}(\Gamma,\K^{*}) \to  \big( \oplus_{j=1}^{s} C_{\mu'_j}\big)/C_{\mu'}\to 1,$$
where $\mu'\coloneqq\lcm(\mu'_1,\ldots,\mu'_{s})$ and the last term is the cokernel of the natural mapping $C_{\mu'}\to \oplus_{j=1}^{s} C_{\mu'_j}$. Hence $\mathrm{Hom}(\Gamma,\K^{*})$ has $\frac{\prod_{j=1}^{s} \mu'_j}{\mu'}$ connected components, with every connected component being isomorphic to $(\K^*)^{2g} $.

For any $\rho=(t_1, \ldots, t_{2g},\lambda_1,\ldots,\lambda_s) \in \Hom(\Gamma,\K^{*})$, we set $\ell_\K(\rho,{\bm \mu})$ as the number of trivial coordinates $\lambda_j=1$ such that $\cha(\K)=p\nmid \mu_j$. When $\rho$ is the trivial character, we simply write $\ell_\K({\bm \mu}).$ 

 %for any local system $\xi$, we define the lentgh $l(\xi)$ as the number of nontrivial coordinates $\lambda_i$ with $\mathrm{char}\K\nmid \mu_i$.

\bp \label{prop compact}
With the notations above, we have  $$\dim H^1(\Gamma, L_\rho)=\begin{cases}
2g+s-2-\ell_\K(\rho,{\bm \mu}), & \mathrm{if}\ L_\rho\neq \K_\Gamma, \\
2g+s-1-\ell_\K({\bm \mu}), & \mathrm{if}\ L_\rho =\K_\Gamma, \ell_\K({\bm \mu})<s \\
2g, & \mathrm{if}\ L_\rho =\K_\Gamma, \ell_\K({\bm \mu})=s.
\end{cases} $$ 
Let $\Hom(\Gamma,\K^*)^0$ denote the connected component of $\Hom(\Gamma,\K^*) $, which contains the constant sheaf $\K_\Gamma$. Then we have
\[
\sV^1(\Gamma, \K)  = 
\begin{cases}
\Hom(\Gamma,\K^*),   &  \text{if } g > 1, \text{ or } g = 1 \text{ and } p \\
                   & \text{divides some } \mu_j, \\[5pt]
(\Hom(\Gamma,\K^*)\setminus \Hom(\Gamma,\K^*)^0) \cup \{\K_\Gamma\},   &  \text{if } g = 1, \frac{\prod_{j=1}^{s} \mu'_j}{\mu'} > 1, \\
                   & \text{and } p \text{ does not divide any } \mu_j, \\[5pt]
\{\K_\Gamma\},             &  \text{if } g = 1, \frac{\prod_{j=1}^{s} \mu'_j}{\mu'} = 1, \\
                   & \text{and } p \text{ does not divide any } \mu_j.
\end{cases}
\]
\ep

\begin{proof}
 Consider the Eilenberg–MacLane space $K(\pi_1,1)$ of $\Gamma$, say $Y$. By Fox calculus  $\partial_1^{\rho}$ has the form of a $(2g+s)\times (s+1)$ matrix  as follows 
\be \label{matrix compact} \begin{pmatrix} 
\frac{\lambda_{1}^{\mu_{1}}-1}{\lambda_{1}-1} & 0 & \cdots & 0 & 1 \\
0 & \frac{\lambda_{2}^{\mu_{2}}-1}{\lambda_{2}-1} & \cdots & 0 & \lambda_{1} \\
\vdots & \vdots & \vdots & \vdots & \vdots \\
0 & 0 & \cdots & \frac{\lambda_{s}^{\mu_{s}}-1}{\lambda_{s}-1} & \prod_{j=1}^{s-1}\lambda_{j} \\
0& 0 & \cdots & 0 & y_{1}-1 \\
0& 0 & \cdots & 0 & 1-x_{1} \\
\vdots & \vdots & \vdots & \vdots & \vdots \\
0& 0 & \cdots & 0 & y_{g}-1 \\
0& 0 & \cdots & 0 & 1-x_{g} \\
\end{pmatrix}. \ee
 If $L_\rho $ is not the constant sheaf,  this matrix has rank $\ell_\K(\rho,{\bm \mu})+1$ and  $\partial_0^{\rho}$ has rank 1, hence
\[ \dim H_1(Y, L_{\rho})=2g+s-2-\ell_\K(\rho, {\bm \mu}). \]

While for the constant sheaf case, if $\ell_\K({\bm \mu})=s$, the above matrix has rank $s$; and if  $\ell_\K({\bm \mu})<s$ (i.e., there exists some $\mu_j$ such that $\cha(\K)=p\mid \mu_j$), the above matrix has rank $\ell_\K({\bm \mu})+1$. Then we have 
\[
\dim H_1(Y,\K)=\begin{cases}
2g+s-\ell_\K({\bm \mu})-1, & \mathrm{if}\ \ell_\K({\bm \mu})<s\\
2g, & \mathrm{if}\ \ell_\K({\bm \mu})=s.
\end{cases} \] 
Then the claim follows from Remark \ref{homology vs cohomology}.
\end{proof}

\br When $\cha(\K)=0$, all the computations in Proposition \ref{prop non-compact} and Proposition \ref{prop compact} are already done  by  Artal Bartolo, Cogolludo-Agust\'in and Matei in \cite[Section 2]{ACM13}. Proposition \ref{prop compact} also corrects a statement in \cite[Proposition 4]{Delz} for all characteristic, while the $\cha(\K)=0$ case is first corrected in loc. cit..
\er

\subsubsection{$L^2$-type invariants for orbifold group}
Let $\Gamma=\pi_1^{\orb}(\Sigma_{g,r},{\bm \mu})$ be the orbifold group associated to the data $(g,r,{\bm \mu}).$ As mentioned before, any epimorphism $\nu\colon \Gamma\twoheadrightarrow\Z$  gives a $\Z$-cover of the space $K(\Gamma,1)$. We set \begin{center}
$\Delta_1(\Gamma^\nu)\coloneqq\Delta_1(K(\Gamma,1)^\nu)$ and $\alpha_1(\Gamma^\nu, \K)\coloneqq \alpha_1(K(\Gamma,1)^\nu,\K)$. 
\end{center}

%Note that for any $\rho \in K^\times$, the pulled character $\nu^* \rho$ always have $\ell_\K(\nu^*\rho, \overrightarrow{\mu})=\ell_\K(\overrightarrow{\mu})$, since the torsion part of the abelianization of $\Gamma$ has to map to zero by $\nu$.

\bp  \label{prop orbifold group} With the above notations, for any epimorphism $\nu\colon\Gamma\twoheadrightarrow\Z$, we have
\begin{itemize}
\item[(1)]   $\alpha_1(\Gamma^\nu, \K)=2g+r-2+\#\{ j \mid \cha(\K)=p \text{ divides } \mu_j\}.$ 
%\ell^c_\K(\overrightarrow{\mu})$, where %$\ell^c_\K(\overrightarrow{\mu})=\#\{\mu_j \mid \cha(\K)=p\mid \mu_j\}$.
\item[(2)] \[
\Delta_1(\Gamma^\nu)=\begin{cases}
\prod_{j=1}^s \mu_j, & \mathrm{if}\ r>0\\
(\prod_{j=1}^s \mu_j ) \cdot (t-1), & \mathrm{if}\ r=0.
\end{cases} \] 
\end{itemize}
\ep
\begin{proof}
(1) Note that for any $\rho \in \K^*$, the pulled character $\nu^* \rho$ always have $\ell_\K(\nu^*\rho, {\bm \mu})=\ell_\K({\bm \mu})$, since the torsion part of the abelianization of $\Gamma$ has to map to zero by $\nu$. Then the claim follows by computation.

(2) If $r>0$, $\lambda_j=1$ implies $ \dfrac{\lambda_j^{\mu_j}-1}{\lambda_j-1}=\mu_j$. Then the claim follows
from (\ref{matrix non-compact}). If $r=0$,  $x_i$ and $y_i$ in (\ref{matrix compact}) will be replaced by some power of $t$.  $\nu$ being epimorphism  implies that the greatest common divisor of the non-zero powers is $\pm 1$. Then the claim follows.
\end{proof}

\section{Proof of the main results}
The following well known result will play a crucial role in the proofs of Theorem \ref{thm jump loci} and Theorem \ref{thm L2}, see e.g. \cite[Lemma 3]{CKO}.
\bt \label{thm finitely generated}
Consider an orbifold map $f\colon X\to \Sigma$ of type $(g,r,{\bm \mu})$. 
 Let $F$ denote the generic fiber of $f$. Then we have a short exact sequence of groups
$$  \pi_1(F) \to \pi_1(X) \overset{f_*}{\to} \pi_1^{\orb}(\Sigma_{g,r},{\bm \mu}) \to 1,  $$
where the first map is induced by the inclusion from $F$ to $X$.
In particular the kernel of $f_*$ is finitely generated.
\et 

\subsection{Proof of Theorem \ref{thm jump loci}}
 By Remark \ref{homology vs cohomology}, we only need to prove the theorem for the homology version. By Theorem  \ref{thm finitely generated}, the orbifold map $f$ gives the following  group extension
\[ 1 \rightarrow K\rightarrow G\overset{f_*}{\to} \Gamma\rightarrow 1, \]
where $G=\pi_1(X) $, $\Gamma=\pi_1^{\orb}(\Sigma_{g,r},{\bm \mu})$ and the kernel $K$ of $f_*$ is a finitely generated normal subgroup of $G$.

Given a character $\rho \in \homo(\Gamma,\K^*)$,  it gives a rank one $\K$-local system $L_\rho$ of $\Gamma$.   Let $f^* L_{\rho}$ denote the rank one local system  pulling back to $G$. 
Then we have the Hochschild-Serre spectral sequence: 
\[ E^2_{s,t} = H_s(\Gamma,H_t(K,f^* L_{\rho}))\Rightarrow H_{s+t}(G,f^* L_{\rho}).\]

Next we explain how to review $H_t(K,f^* L_{\rho})$ as a $\Gamma$-module. 
Let $P_{\bullet}$ be a free $G$-resolution of $\Z$, then the $\Gamma$ action on $H_*(K, f^* L_{\rho})=H_*(P_{\bullet}\otimes_{ K} f^* L_{\rho})$ is induced by the tensor product of the $G$-action on $f^*L_\rho$ and $G$-action on $P_\bullet$.   Since $K $ is the kernel of $f_*$,  $f^*L_\rho$ taking restriction over $K$ is the constant sheaf. Then we have 
\[ H_*(K,f^* L_{\rho})=H_*(P_{\bullet}\otimes_{ K} f^* L_{\rho})\cong H_*(K,\Z)\otimes_\Z \K\]
as $\Gamma$-modules. 
The $\Gamma$-action on $H_*(K,\Z)\otimes_\Z \K $ is the tensor product of the $\Gamma$-conjugation on $H_*(K,\Z)$ and the $\Gamma$-action on $\K$ induced by $\rho$. To emphasize the second action, we denote the $\Gamma$-module $H_*(K,\Z)\otimes_\Z \K$ by $ H_1(K,\Z)\otimes_\Z L_{\rho}$.   
In particular, we have $E^2_{1,0}=H_1(\Gamma,L_{\rho})$ and $E^2_{0,1}=H_0(\Gamma, H_1(K,\Z)\otimes_\Z L_{\rho})$. Then the spectral sequence gives us the following short exact sequence
\[ 0\rightarrow E^2_{0,1}/\mathrm{im}d_2 \rightarrow H_1(G,f^* L_{\rho})\rightarrow H_1(\Gamma,L_{\rho})\rightarrow 0,\]
where $d_2\colon E^2_{2,0} \to E^2_{0,1}$  is the differential map on $E^2$-page. 
Hence $$\mathrm{dim}H_1(G,f^* L_{\rho})\geq \mathrm{dim}H_1(\Gamma,L_{\rho})$$ and the equality holds  if $ E^2_{0,1}=0$.  

Since $K$ is a finitely generated group,   $H_1(K,\Z)\otimes_\Z L_{\rho}$  is a finite dimensional $\K$-vector space, say denoted by $W$.  For any $\gamma\in\Gamma$,  let $c(\gamma)$ denote the corresponding linear transformation on $W$ induced by the $\Gamma$-conjugation on $H_1(K,\Z)$.  Note that the linear transformation $c(\gamma)$ is independent with the character $\rho$.  Therefore
\[ E^2_{0,1}= H_0(\Gamma, H_1(K,\Z)\otimes_\Z L_{\rho})\cong W/\langle \big(c(\gamma)\otimes\rho(\gamma)-\mathrm{Id}\big)(w)|w\in W,  \gamma\in\Gamma\rangle.  \]
Recall the moduli space $\Hom(\Gamma,\K^*)$ as in section \ref{section orbifold groups}. In the non-compact case, for any $\rho=(t_1,\cdots,t_{n},\lambda_1,\cdots,\lambda_s)\in \Hom(\Gamma,\K^*)$, $E^2_{0,1}=0$ if  we choose $t_i$ not  to be the inverse of the eigenvalues of $c(x_i)$ for some $1\leq i\leq n$. Here $x_i$ is the $i$-th generator of the group $\Gamma=F_{n}\ast\mathbb{Z}_{\mu_{1}}\cdots\ast\mathbb{Z}_{\mu_{s}}$. Then the claim follows since $c(x_i)$ has only finitely many eigenvalues and there are only finitely many choices for $\lambda_j$.   
The compact case follows by a similar proof. 

\subsection{Proof of Theorem \ref{thm L2}}
The formula for $\alpha_1(X^\nu,\K)$ follows from Theorem \ref{thm jump loci} and the computations in subsection \ref{section orbifold groups} directly. 
The rest part is devoted to the proof for $\M_1(X^\nu)$.

By Definition \ref{def orbifold effective}, one indeed gets the following sequence of epimorphisms:
$$ G\overset{f_*}{\to} \Gamma   \overset{\nu'}{\twoheadrightarrow} \Z.$$
The map $\nu'\colon \Gamma\twoheadrightarrow\Z$ gives a rank one local system $\mathscr{L}$ of $\Gamma$ with stalk $\Z[t^{\pm}]$.  Let $f^* \mathscr{L}$ denote the rank one local system pulling back to $G$.   
In particular, 
\begin{center}
    $H_1(\Gamma, \mathscr{L})\cong H_1(\Gamma^{\nu'},\Z)$ and $H_1(G, f^*\mathscr{L})\cong H_1(X^\nu,\Z)$.
\end{center}

Applying the Hochschild-Serre spectral sequence,  we get:
\[ E^2_{s,t}=H_s(\Gamma,  H_t(K, f^*\mathscr{L}))\Rightarrow H_{s+t}(G,  f^* \mathscr{L}), \]
where $K$ is the kernel of $f_*$. 
Similarly to the proof of Theorem \ref{thm jump loci},  we have $E^2_{1,0}=H_1(\Gamma, \mathscr{L})$, $E^2_{0,1}=H_0(\Gamma,  H_1(K, f^*\mathscr{L}))$ and 
a short exact sequence of finitely generated $\Z[t^\pm]$-modules
\[ 0\rightarrow E^2_{0,1}/\mathrm{im}d_2 \rightarrow H_1(G,f^* \mathscr{L})\rightarrow H_1(\Gamma, \mathscr{L})\rightarrow 0.\]
Note that  $H_0(\Gamma,  H_1(K, f^*\mathscr{L}))$ is isomorphic to 
\[H_1(K,\Z)\otimes \Z[t^\pm] /\langle\big(c(\gamma)\otimes \nu'(\gamma)-\mathrm{Id}\big)(x\otimes t^i)|x\otimes t^i \in H_1(K,\Z)\otimes \Z[t^\pm],  \gamma\in\Gamma \rangle \]
as $\Z[t^\pm]$-modules,  
where $c(\gamma)$ is the automorphism on $H_1(K,\Z)$ induced by the conjugation of $\gamma$ on $K$. Since $\nu' \colon\Gamma \to \Z $ is surjective, $H_0(\Gamma,  H_1(K, f^*\mathscr{L}))$ is a quotient abelian group of $H_1(K,\Z)$, hence $E^2_{0,1}$ is a finitely generated abelian group.  In fact, for any $x\in H_1(K,\Z) $ and $i\in\Z$, there exists some $\gamma \in \Gamma$ such that
$\nu'(\gamma)=i $, hence   $x\otimes t^i$ is equivalent to $c(\gamma)^{-1}(x) \otimes 1$ in $H_0(\Gamma,  H_1(K, f^*\mathscr{L}))$.
 %we claim that $E^2_{0,1}=H_0(\Gamma,  H_1(K, f^*\mathscr{L}))$ is a finitely generated abelian group.   We have an 

 $E^2_{0,1}$ being a finitely generated abelian group implies that the rank of $E^2_{0,1}$ as a $\Z[t^{\pm}]$-module is 0, hence so is $E^2_{0,1}/\im d_2$.  By Lemma \ref{lem commutative algebra} we have
\[ \Delta_1(G^{\nu})=\Delta_1(\Gamma^{\nu})\cdot \Delta(E^2_{0,1}/\mathrm{im}d_2).  \]
By Proposition \ref{torsion}, $ \Delta_1(G^{\nu})$ is of cyclotomic type, hence so is $\Delta(E^2_{0,1}/\mathrm{im}d_2)$ and 
$\Delta_1(\Gamma^{\nu})$. 
Then $\M( \Delta(E^2_{0,1}/\mathrm{im}d_2))$ only depends on the leading coefficient of $\Delta(E^2_{0,1}/\mathrm{im}d_2)$. 
This leading coefficient is indeed 1. 
Otherwise, there exists a prime number $p$ which divides the leading coefficient. Since $\Delta(E^2_{0,1}/\mathrm{im}d_2)$ is cyclotomic type, $p$ divides $\Delta(E^2_{0,1}/\mathrm{im}d_2)$.  By the definition of Alexander polynomial,  this implies that rank of $E^2_{0,1}/\mathrm{im}d_2 \otimes \Fp$ as a $\Fp [t^{\pm}]$-module is at least one. In particular, $E^2_{0,1}/\mathrm{im}d_2$ is not a finitely generated abelian group. This contradicts with the fact that $E^2_{0,1}$ is a finite generated abelian group.

Putting all together, we have that
\[ \M_1(X^\nu)=\M_1(\Gamma^{\nu})=\sum_{j=1}^{s}\log \mu_j , \]
where the last equality follows from Proposition \ref{prop orbifold group}.
\subsection{Some remarks}
We close this section with some remarks for orbifold maps. %defined in \ref{sec 1.2.1}. 

\br \label{rem finitely many eq} 
Given two orbifold maps $f_i\colon X\rightarrow\Sigma_i$ for $i=1,2$. We say that they are equivalent if there is an isomorphism $g\colon \Sigma_1\rightarrow\Sigma_2$ such that ${\bm \mu}_2(g(B_1))={\bm \mu}_1(B_1)$  and the following diagram is commutative
\[ \xymatrix{
 & X\ar[dl]_{f_1}\ar[dr]^{f_2} & \\
\Sigma_1\ar"2,3"^{g} & & \Sigma_2 .\\
}\]
%Then  Theorem \ref{thm Ara} indeed implies that there are at most finitely many equivalent large or small orbifold maps for $X$ \cite[Theorem 1.6]{Ara97}, since $\sV^1(X,\C)$ is algebraic. 
%There are at most finitely many equivalent hyperbolic orbifold maps for $X$, see e.g. \cite[Theorem 2]{Delz}.  
For an orbifold map $f\colon X \to \Sigma$, the image of the induced embedding $ \Hom(\pi_1(\Sigma),\C^*) \hookrightarrow \Hom(\pi_1(X),\C^*)$ is called the shadow of $f$ as in \cite{ACM13}. 
Then  for any two hyperbolic orbifold maps $f_i\colon X\to \Sigma_i$ with $i=1,2$,  their shadows are either equal or intersect at most finitely many isolated torsion points, proved by Dimca, Papadima and Suciu in  \cite[Theorem 4.2]{DPS08} and \cite[Lemma 6.12]{DPS09}, see also \cite[Lemma 6.4]{ACM13}. In fact, they proved the claim with the further requirement $\frac{\prod_j=1^s \mu_j}{\gcd\{\mu_1,\cdots,\mu_s\}}>1$ if $g=1$ and $r=0$, but the proof works as long as $f$ is hyperbolic. 
%for a unique orbifold map $f$ (up to equivalence) 
\er

 \br \label{rem well-defined}  Assume that $\nu\colon \pi_1(X)\twoheadrightarrow \Z$ is orbifold effective. Note that for the last claim in Definition \ref{def orbifold effective} it is possible that $\nu$ factors through two different orbifold maps.

 If the associated orbifold map $f$ in Definition \ref{def orbifold effective} is hyperbolic, $f$ is unique under the equivalence relationship given in the above remark. In fact, $\nu$ and $f$  induces two injective maps \begin{center}
$ \nu^*_\C\colon \C^* \hookrightarrow \Hom(\pi_1(X), \C^*)$ and $f^*_\C\colon \Hom(\pi_1(\Sigma), \C^*) \hookrightarrow \Hom(\pi_1(X), \C^*) .$
\end{center}
In particular, $\im \nu_\C^* \subseteq \im f^*_\C$.   Remark \ref{rem finitely many eq} shows that $f$ is unique, hence it makes sense to call $\nu$ of type $(g,r, {\bm \mu})$.

On the other hand, when $f$ is null,  $\Sigma$ is either $\C^*$ or an Elliptic curve  and
${\bm \mu}$ is trivial. In particular, $\nu$ is of type $(0,2,1)$ or $(1,0,1)$, respectively.
\er
 
\br   \label{rem Hodge}
%Consider $\nu$ as an element in $H^1(X,\Z)$. 
 We show that under certain Hodge structure assumptions  $\nu$ is always orbifold effective.
 \begin{itemize}
\item[(1)]  $\nu\colon \pi_1(X)\twoheadrightarrow\Z$ is induced by an algebraic map
$h\colon X\to \C^*$ if and only if $\nu$,  considered as an element in $H^1(X,\Z)$,  is of Hodge type $(1, 1)$, i.e., $\nu \in F^1 H^1(X,\C)\cap  \overline{F^1 H^1(X,\C)}$. Here $F$ stands for the Hodge filtration. This follows from Deligne’s theory of 1-motives (cf. \cite[(10.I.3)]{Del}).
Then $\nu$ is orbifold effective in this case. In fact, projectivising and resolving indeterminacy, we get a map $\overline{h} \colon \overline{X} \to \mathbb{CP}^1$. Using Stein factorization,  we get the following commutative diagram $$\xymatrix{
X \ar[d]^{f}  \ar@(dl,ul)"3,1"_(0.5){h} \ar[r] & \overline{X} \ar[d]^{ h'} \ar@(dr,ur)"3,2"^(0.5){\overline{h}}  \\
\im(f) \ar[r] \ar[d] & S \ar[d]^{h''} \\
\C^*\ar[r] & \mathbb{CP}^1,
}$$
where $h''$ is a finite map, $h'$ has connected fiber and  $f\coloneqq h'\vert_X\colon  X  \to \im(f)$. \cite[Lemma 2.2]{Dim07} shows that $f$ has connected generic fiber. Hence $f$ is an orbifold map and it makes $\nu$ orbifold effective.

In particular, if   $H^1(X,\Q)$ is a pure Hodge structure of type $(1, 1)$, then any epimorphism  $\nu\colon \pi_1(X)\twoheadrightarrow\Z$ is orbifold effective. 
This assumption is equivalent to that $X$ has a smooth compactification $\overline{X}$ such that $H^1(\overline{X},\Q)=0$. Typical examples are hyperplane arrangement complement, toric arrangement complement and the complement of some algebraic curves in $\mathbb{CP}^2$. See \cite[Example 2.3]{Dim07} for more examples.
%Moreover, by Theorem \ref{thm dup} $X$ is $1$-formal in this case.

\item[(2)]
 $\nu\colon \pi_1(X)\twoheadrightarrow\Z$ factors through the induced map on the fundamental group level by an algebraic map
$f\colon X\to E$ with $E$ being an Elliptic curve, if and only if,   $\nu$, considered as an element in $H^1(X,\Z)$, is contained in a dimension 2 weight 1 pure sub-Hodge structure of $ H^1(X,\Q)$.   This also follows from Deligne’s theory of 1-motives (cf. \cite[(10.I.3)]{Del}).
Using a similar proof as above, $\nu$ is also orbifold effective in this case. 

%\item[(3)] Let $ \mathrm{alb} \colon X\to \mathrm{Alb}_X$ denote the Albanese map of $X$, where $\mathrm{Alb}_X$ is a semi-abelian variety. Since the epimorphism $\nu$ only concerns the fundamental group, using Lefschetz hyperplane section theorem, we can assume that $X$ has complex dimension 2.  Then $\im \mathrm{alb}$ has dimension either 1 or 2. If $ \dim \im \mathrm{alb} =1$, then  any $ \nu\colon \pi_1(X)\twoheadrightarrow\Z$ is orbifold effective by a similar proof as above.
\end{itemize}
\er

\bex \label{ex non orbifold effective}
 In general $\nu$ is not necessarily always orbifold effective.  Consider $X=E\times \C^*$ with $E$ being an Elliptic curve. Then we have
$\pi_1(X,\Z)\cong \pi_1(E,\Z)\oplus \pi_1(\C^*,\Z)\cong \Z^3$. 
Take a corresponding basis for this direct sum. Say $\nu\colon \Z^3 \twoheadrightarrow \Z$ is given by three integers $(a,b,c)$.  Then $\nu$ is orbifold effective, if and only if, either $c=0$ or $a=0=b$. 
In fact, let $f\colon X\to \Sigma$ be an orbifold map. Then $f$ induces an injective map $H^1(\Sigma,\Q) \to H^1(X,\Q) $ of mixed Hodge structures and we have the following three possible cases:
\begin{itemize}
    \item If the mixed Hodge structure on $ H^1(\Sigma,\Q)$ has pure weight one, then $c=0$.
    \item If the mixed Hodge structure on $ H^1(\Sigma,\Q)$ has pure weight two, then $a=0=b$. 
    \item  $ H^1(\Sigma,\Q)$ is isomorphic to  $H^1(X,\Q) $. 
\end{itemize} 
The last case indeed can not happen, otherwise we have the following commutative diagram
    \[ \xymatrix{
  X\ar[r] \ar[d] & \Sigma \ar[d] \\
  \mathrm{Alb}_X \ar[r] & \mathrm{Alb}_\Sigma,\\
}\]  where the two vertical maps are Albanese maps. 
Then  the left vertical map and the bottom horizontal map both have to be isomorphisms. Hence the right vertical map is surjective. This is impossible since $\Sigma$ has complex dimension one less than $X$. 
\eex

\section{Hyperplane arrangement}\label{sec hyperlane arr}
In this section, we specialize to the case of hyperplane arrangements.
An arrangement of hyperplanes  is a finite collection of hyperplanes in a complex affine or projective space.
Let $\sA$ be a hyperplane arrangement in  $\CP^n$, defined by a product of degree 1 homogeneous polynomials 
$$ Q(\sA)\coloneqq \prod_{H\in \sA} f_H.$$
This also gives a central hyperplane arrangement in $\C^{n+1}$, denoted by $\overline{\sA}$.
Set
\begin{center}
$X(\sA)\coloneqq\CP^n -\sA$ and $M(\sA)\coloneqq \C^{n+1}\setminus \overline{\sA}$. 
\end{center}
%We also set $X(\sA)\coloneqq \mathbb{P}(U(\sA))$, the projection of $U(\sA)$, which can be viewed as a complement of hyperplane arrangement in $\C^n$ by fixing a hyperplane at infinity. 
Note that
 $M(\sA)=X(\sA)\times \C^*$, hence one can identify $\sV^1(X(\sA),\K)$ and $\sV^1(M(\sA),\K)$, see e.g. \cite[Section 3.3]{Suc14B}. %When the context is clear, we simply write $X$ for $X(\sA)$.
When the context is clear, we simply write $X(\sA)$ as $X$.

\subsection{Some properties and examples}
We first list some facts for hyperplane arrangement complement $X$.
\begin{itemize}
\item[(1)] $X$ is  homotopy equivalent to a minimal CW complex \cite{DP03,Ran}, hence $H^i(X,\Z)$ is a free abelian group for any $0\leq i \leq n$. 
\item[(2)] The cohomology ring $H^*(X,\Z)$ is determined by the combinatorial data (i.e., the poset of intersections of hyperplanes) of $\sA$. \cite[Chapter 3]{Dim17}.  %Hence so is $\sR^i_k(X,\K)$ for any field $\K$.
\item[(3)] For any epimorphism $\nu \colon  \pi_1(X)\twoheadrightarrow \Z$,  $\alpha_i(X^\nu,\C)=\beta_i(X,\nu_\C)$ for any $i$. This follows from the tangent cone equality between $\sV^i_k(X,\C)$ and the corresponding resonance varieties of $X$, see e.g. \cite[Theorem 3.7]{CO}
\end{itemize}

 \medskip

%By these facts, we have the following statements .

Next we give an application of Proposition \ref{prop minimal}. For a positive integer $k>1$, let $\Phi_k(t)$ be the irreducible cyclotomic polynomial of primitive $k$-th root of unity. It is known that
\[ \Phi_k(1)=\begin{cases}
1, &  \text{if } k \text{ is not a power of any prime number}, \\
p, & \text{if } k=p^r  \text{ for some prime number } p \text{ and some positive intger } r.
\end{cases}\]
For any epimorphism $\nu \colon  \pi_1(X)\twoheadrightarrow \Z$, 
by Proposition \ref{torsion} $\Delta_i(X^\nu)$ is of cyclotomic type. Let $\Delta'_i(X^\nu)$ denote the polynomial obtained from $\Delta_i(X^\nu)$ by taking out $(t-1)$ factors and we write it down as
 $$ \Delta'_i(X^\nu)=c_i \prod_{k>1} \Phi_k(t)^{e_{i,k}} .$$

\bp \label{prop ha upper bound}
With the above assumptions and notations for the hyperplane arrangement complement $X$,  
for any degree $i>0$,
we have \be  \label{nice}
c_i (\prod_{p} p^{e_{i,p}+e_{i,p^2}+\cdots }) \mid \tau_i(X,\nu_\Z), \ee
where the product runs over all prime number $p>0$. 
%Note that $\tau_i(X,\nu_\Z)$ is combinatorially determined once $\nu$ is fixed.
Moreover, we have  $$\exp(\M_i(X^\nu))\mid \tau_i(X,\nu_\Z).$$
In particular, say $ \nu$ is orbifold effective of type $(0,r,{\bm \mu})$. 
Then we have
\[ \prod_{j=1}^s \mu_j\mid \tau_1(X,\nu_\Z).\]
\ep 
\begin{proof}  Since $\Delta_i(X^\nu)$ is of cyclotomic type, we get   $\M_i(X^\nu )=\log c_i$, where $c_i$ is the leading coefficient of $\Delta_i(X^\nu)$.  By the properties listed above, hyperplane arrangement complement satisfies the assumptions in Proposition \ref{prop minimal}, hence the first claim follows. It implies that $ c_i \mid \tau_i(X,\nu_\Z)$.  Combined with Theorem \ref{thm L2}, the third claim follows. 
%Therefore if $\Delta_i(X^\nu)$ has roots with order being power of $p$, we have $p\mid \tau_i(X,\nu_\Z)$.
\end{proof}

 One may compare (\ref{nice}) in the above proposition with the following result due to Cohen and Orlik \cite[Theorem 1.3]{CO} (also see \cite{PS10} for its generalization).  
\bt
Let $X$ be a hyperplane arrangement complement. Then for any epimorphism $\nu\colon \pi_1(X) \twoheadrightarrow \Z$ and any $\lambda\in \C^*$ with order being some power of a prime number $p$,
we have 
$$\dim H^i(X,L_\lambda) \leq \beta_i(X, \nu_{\p}).$$
\et

Next two examples show that both inequalities in Theorem \ref{PS} and Proposition \ref{prop ha upper bound} could happen.

\bex \label{ex 1}
Let $\mathcal{A}\coloneqq\{[x,y]\in \CP^1| x^d+y^d=0\}$. 
Consider the complement $M(\sA)$ of the corresponding central hyperplane arrangement $\overline{\sA}$ in $\C^2$. 
 Take $ \nu\colon \pi_1(M(\sA)) \to \Z$ as the epimorphism that sends the meridian for $H_\ell$ to an integer $n_\ell$ with $\gcd\{n_\ell \colon 1\leq \ell \leq d\}=1$.
It is easy to see that for any field $\K$  
\begin{center}
 $\alpha_1(M(\sA)^\nu,\K) =\begin{cases}
0, & \mathrm{if}\ \sum_{\ell=1}^d n_\ell\neq 0,\\
d-2 , & \mathrm{if}\ \sum_{\ell=1}^d n_\ell=0,
\end{cases} $
\end{center}
hence $\M_1(M(\sA)^\nu)=0$.

We have a basis $\{a_1, \ldots, a_d\}$ for $H^1(M(\sA),\Z)$, where $a_\ell$ corresponds to the $\ell$-th hyperplane for $1\leq \ell \leq d$, hence $\nu_\Z= \sum_{\ell=1}^d n_\ell a_\ell $.  $H^2(M(\sA),\Z)$ is generated by $\{ a_{ij}\coloneqq a_i \cup a_j\}_{1\leq i <j \leq d}$ with the relation $ a_{ij}= a_{1j} -a_{1i}$. %for $1\leq i<j\leq n$.
Hence $H^2(M(\sA),\Z)$ has a basis $\{a_{1\ell}\}_{2\leq \ell \leq d}$. Let us write down the matrix of the map $H^1(M(\sA),Z) \overset{\nu_\Z}{\to} H^2(M(\sA),\Z)$ under these basis:
$$\begin{pmatrix}
n_2      & n_3      & n_4 & \cdots & n_d \\
-\sum_{i\neq 2} n_i   & n_3      & n_4 &  \cdots & n_d               \\
n_2      &   -\sum_{i\neq 3} n_i & n_4  &   \cdots & n_d               \\
\vdots & \vdots & \vdots & \vdots & \vdots       \\
n_2      & n_3      & n_4      & \cdots & -\sum_{i\neq d} n_i
\end{pmatrix}$$

When  $\sum_{\ell=1}^d n_\ell \neq 0$, this matrix has rank $d-1$ and $\tau_1(M(\sA),\nu_\Z)=(\sum_{\ell=1}^d n_\ell)^{d-2}$ (since $\gcd\{n_\ell \colon 1\leq \ell \leq d\}=1$). On the other hand, when $\sum_{\ell=1}^d n_\ell =0$, it has rank $1$ and the matrix can be changed to $ \begin{pmatrix}
1 & 0 & \ldots & 0\\
\vdots & \vdots & \vdots & 0\\
1 & 0 &\ldots & 0
\end{pmatrix}$ by elementary column operation since $\gcd\{n_\ell \colon 1\leq \ell \leq d\}=1$, hence $\tau_1(M(\sA),\nu_\Z)=1$. Then we have \begin{center}
 $\beta_1(M(\sA),\nu_\K) =\begin{cases}
d-2, & \mathrm{if}\ \sum_{\ell=1}^d n_\ell= 0,\\
d-2, & \mathrm{if} \sum_{\ell=1}^d n_\ell \neq 0 \text{ and } \cha(\K)=p \mid \sum_{\ell=1}^d n_\ell,\\
0, & \mathrm{otherwise}.
\end{cases} 
$
 \end{center} Hence if $\sum_{\ell=1}^d n_\ell \neq 0$ and $p$ divides $\sum_{\ell=1}^d n_\ell $, we have  $\alpha_1(M(\sA)^\nu,\C)<\beta_1(M(\sA),\nu_{\p})$ and
\begin{center}
 $\exp( \M_1(M(\sA)^\nu) )=1< (\sum_{\ell=1}^d n_\ell)^{d-2}=\tau_1(M(\sA),\nu_\Z).$
\end{center}

 Under suitable basis $\partial_1^\Z$ can be written as  form of matrix of size $d\times (d-1)$
$$\begin{pmatrix}
1-t^{n_1}      & 1-t^{n_2} &  \ldots & 1-t^{n_{d-1}} \\
t^{\sum_{\ell=1}^d n_\ell}-1      & 0 & \ldots & 0 \\
   0     & t^{\sum_{\ell=1}^d n_\ell}-1 &  \ldots & 0               \\
\vdots & \vdots & \ddots & \vdots       \\
      0    & 0      & \cdots & t^{\sum_{\ell=1}^d n_\ell}-1
\end{pmatrix}$$
One can get  
\begin{center}
 $\Delta_1(M(\sA)^\nu) =\begin{cases}
(t-1)(t^{\sum_{\ell=1}^d n_\ell}-1)^{d-2}, & \mathrm{if}\ \sum_{\ell=1}^d n_\ell\neq 0,\\
t-1 , & \mathrm{if}\ \sum_{\ell=1}^d n_\ell=0.
\end{cases} $
\end{center}
In particular, if $\sum_{\ell=1}^d n_\ell\neq 0$, $(1+t+\ldots + t^{(\sum_{\ell=1}^d n_\ell)-1})^{d-2} \vert_{t=1}= (\sum_{\ell=1}^d n_\ell)^{d-2}$, which coincides with $\tau_1(M(\sA),\nu_\Z)$.  
\eex

\bex \label{ex 2}
Let $\mathcal{B}$ be the deleted $B_3$-arrangement in $\CP^2$ with defining equations $$zxy(x-y)(x-z)(y-z)(x-y-z)(x-y+z).$$ Order the
hyperplanes as the factors of the defining polynomial. Let $X(\sA)$ be the complement of the arrangement.  
It was first discovered by Suciu \cite{Suc02} that $\V^1(X(\sA),\C)$  has a transalated component 
$$V\coloneqq \rho \otimes \{t, t^{-1},1,t^{-1},t,t^2,t^{-2} \mid t\in \C^*\} $$
with $\rho = (1,-1,-1,-1,1,1,1)\in (\C^*)^7$, e.g. see \cite[Example 6.16]{Dim17}. Here we take $\{z=0\}$ as the hyperplane at infinity.
This component is induced by the orbifold map $f\colon X\rightarrow \C^*$ as following
\[ f([x,y,z])=\frac{x(y-z)(x-y-z)^2}{y(x-z)(x-y+z)^2} \]
and $f$ is  of type $(0,2,2)$. 
Consider $\nu=(1,-1,0,-1,1,2,-2) \in H^1(X(\sA),\Z)$ induced by $f$.  Then we have 
\begin{center}
$\alpha_1(X(\sA)^\nu,\K) =\begin{cases}
1, & \mathrm{if}\ \cha(\K)=2,\\
0 , & \mathrm{if}\ \cha(\K)\neq 2,
\end{cases} $
and $\M_1(X(\sA)^\nu)=\log 2$.
\end{center}

On the other hand, the Aomoto complex for $\nu_\Z$ and $\nu_\K$ can be computed by the formula given in \cite[p.119]{Dim17}. A direct computation shows that 
\begin{center}
 $\beta_1(X(\sA),\nu_\K) =\begin{cases}
1, & \mathrm{if}\ \cha(\K)=2,\\
0 , & \mathrm{if}\ \cha(\K)\neq 2.
\end{cases} $ and $\tau_1(X(\sA),\nu_\Z)=4$.
\end{center}

Using the formula given in  \cite[Example 4.1]{Suc02} (be careful that in \cite[Example 4.1]{Suc02} hyperplanes are ordered different from ours), we get  $\Delta'_1(X(\sA)^\nu)=2(t+1)$, hence $\Delta'_1(X(\sA)^\nu)(1)=4$, which coincides with $ \tau_1(X(\sA),\nu_Z)$.
\eex

It would be interesting to have an example where $\Delta'_1(X(\sA)^\nu)(1) \neq \tau_1(X(\sA),\nu_\Z).$

\subsection{Multinet}
We first recall the definition of multinet.
\bd \label{def multinet} Let $\sA$ be an essential hyperplane arrangement in $\CP^2$.  A multinet on  $\sA$ is a partition of $\sA$ into $k\geq 3$ subsets $\sA_1,\cdots, \sA_k$, together with an assignment of multiplicities, $n\colon \sA \to \Z_{>0}$, and a subset $\mathfrak{X}$ of intersection points in $\sA$ satisfying the following conditions:
\begin{itemize}
\item[(a)] $\sum_{H\in \sA_i} n_H= \kappa$, independent of $i$;
\item[(b)] for each $H\in \sA_i$ and $H'\in \sA_j$ with $i\neq j$, $H\cap H' \in \mathfrak{X}$; 
\item[(c)] for each $x\in \mathfrak{X}$, the sum $\sum_{H\in \sA_i, x\in H} n_H$ is independent of $i$;
\item[(d)] for every $1\leq i \leq k$ and $H,H'\in \sA_i$, there is a sequence $H=H_0,H_1,\cdots,H_s=H'$ in $\sA_i$ such that $H_{j-1}\cap H_j \notin \mathfrak{X}$ for $1\leq j \leq s$.
\item[(e)] $\gcd(n_H\colon H\in \sA)$=1.
\end{itemize} Such multinet is called a $(k,\kappa)$-multinet.
\ed
By results of Pereira-Yuzvinsky \cite{PY} and Yuzvinsky \cite{Yuz}, $k=3$ or $4$ if $\vert \mathfrak{X}\vert>1$. It was conjectured by Yuzvinksy  that $k=4$ only happens for Hessian  arrangement.
As shown by Falk, Pereira and Yuzvinsky \cite{FY07,PY}, every $(k,\kappa)$-multinet determines a hyperbolic orbifold map $f\colon X(\sA) \to \Sigma_{0,k}\subset \CP^1$ by
$$f(x)\coloneqq [\prod_{H\in \sA_1} f_H^{n_H} (x), \prod_{H\in \sA_2} f_H^{n_H}(x)]. $$
Moreover, the positive dimension components of $\sV^1(X,\C)$ passing through the origin are combinatorially determined 
 as follows, see e.g. \cite[Section 3]{Suc14}.
\bt \cite{FY07} Let $\sA$ be an essential hyperplane arrangement in $\mathbb{CP}^2$ with complement $X$.  
Then every positive dimensional component of $\sV^1(X,\C)$ passing through the origin is obtained
as a pull-back as in Theorem \ref{thm Ara}(a) using either a local orbifold map $f_x$ coming from a intersection point $x$ in $\sA$ of multiplicity $m_x\geq 3$ or a hyperbolic map $f\colon  X\to \Sigma_{0,k}$ coming from a multinet structure on a sub-arrangement of $\sA$ with $k=3,4$.
\et

As $k=4$ conjecturally only  happens for Hessian arrangement, we consider a multinet on $\sA$ with $k=3$.  Under the following assumptions, we will show that the corresponding orbifold map has no multiple fiber.
\begin{assumption} \label{assumption} Given a $(3,\kappa)$-multinet on  $\sA$ in $\CP^2$, we assume that
\begin{itemize}
\item[(a)] $n_H=1$ for any $H\in \sA_1$ or $ \sA_2$.
\item[(b)] If $H,H'\in \sA_1$ and $H\cap H' \notin \mathfrak{X}$, then there is no third hyperplane in $\sA$ passing through $H\cap H'$. So is $\sA_2$.
\item[(c)] Consider  $\sA_3$ and $\mathfrak{X}$. If there exists a point $x\in \mathfrak{X}$ such that $x$ is contained in precisely one hyperplane $H\in \sA_3$, one replaces $\sA_3$ and  $\mathfrak{X}$ by     deleting  the hyperplane $H$ and the point $x$, respectively.  We assume that one can run this procedure for $\sA_3$ and $\mathfrak{X}$ until there is no hyperplane left in $\sA_3$.
\end{itemize} 
\end{assumption}
\bp With the above assumptions, the orbifold map corresponding to this multinet has no multiple fiber. In particular,  $\sV^1(X(\sA),\C)=\sV^1(M(\sA),\C)$  does not have translated two dimensional component corresponding to this orbifold map.
\ep 

\begin{proof}
Since $M(\sA)=X(\sA)\times \C^* $, we only need to prove the claim for $M(\sA)$. 
We take $\nu \in H^1(M(\sA),\Z) $, which maps the meridian of $H$ to $1$ for $H\in \sA_1\cup \sA_2$ and $-2 n_H$ for $H\in \sA_3$.
Then the claim follows if we can show that $\tau_1(M(\sA)^\nu)=1$.

Consider the map  $H^1(M(\sA),\Z) \overset{\nu_\Z}{\to} H^2(M(\sA),\Z)$. Let $\mathfrak{Y}_i$ denote the intersection points in the sub-arrangement $\sA_i$ but not contained in $\mathfrak{X}$.  Due to Brieskorn Decomposition \cite[Theorem 3.2]{Dim17}, we have $$H^2(M(\sA),\Z)\cong \oplus_{x\in \mathfrak{X}\cup \mathfrak{Y}_1\cup \mathfrak{Y}_2\cup \mathfrak{Y}_3} H^2(M(\sA_x),\Z),$$
where $\sA_x$ is the central line arrangement for the point $x$. For any $x\in \mathfrak{X}$, Definition \ref{def multinet}(c) and
 the choice of $\nu$ give $$\sum_{H\in \sA_1, x\in H} n_H+\sum_{H\in \sA_2, x\in H} n_H-\sum_{H\in \sA_3, x\in H} 2n_H=0.$$ Then   Definition \ref{def multinet}(c) together with Assumption \ref{assumption}(a) implies that one can find $H\in \sA_1$ with $n_H=1$ and $x\in H$.
Hence the computations in Example \ref{ex 1}  show that one can replace the sub-matrix corresponding to $\sA_i\times \mathfrak{X}$ by the column obtained from the incidence relation between the hyperplanes and $\mathfrak{X}$.   We have the following matrix
\begin{equation} \label{matrix0}
\bordermatrix{% 
   &      \mathfrak{X} &    \mathfrak{Y}_1 &  \mathfrak{Y}_2 &  \mathfrak{Y}_3\cr
 \sA_1  &     C_1 &      B_1 & 0 & 0\cr  
  \sA_2 &     C_2 &        0 &  B_2    & 0\cr
  \sA_3 &     C_3 &     0 & 0 & B_3 
},
\end{equation}
Here $C_i$ is the $\vert \sA_i\vert \times \vert \mathfrak{X}\vert $ incidence matrix, where the entry $(H,x)$ is 1 precisely when  $x \in H$.

Next we compute the matrix $B_1$. Consider a graph, whose vertices correspond to the hyperplanes in $\sA_1$ and two vertices has an unordered edge if the corresponding two hyperplanes intersect at a point in $\mathfrak{Y}_1$. 
Definition \ref{def multinet}(d)  is equivalent to say that this graph is connected. Assumption \ref{assumption}(b) implies that  there is a one to one correspondence between the edges and $\mathfrak{Y}_1$.
Choose a spanning tree in this graph and consider the submatrix corresponding to the vertices and the edges in it. 
By Assumption \ref{assumption}(a),(b), we get a $\kappa\times (\kappa-1)$ matrix, where $\kappa=\vert \sA_1 \vert$ and every column in this matrix has $1$ and $-1$ both for precisely one entry and $0$ for the rest. By easy computations, we get that $B_1$ has rank $\kappa-1$ and has a $(\kappa-1)\times (\kappa-1)$ minor being $\pm 1$. 
 The same computation works for $B_2$. 
 
 For the $(3,\kappa)$ multinet, 
since  $\beta_1(M(\sA),\nu_\C)=\alpha_1(M(\sA)^\nu,\C)=-\chi(\Sigma_{0,3})=1$, the matrix (\ref{matrix0}) has rank $d-2$ with $d=\vert \sA \vert$, see \cite[Theorem 3.4]{LY}. Assumption \ref{assumption}(c) implies that $C_3$ can be changed to 
$ \begin{pmatrix}
\mathrm{Id} & 0 
\end{pmatrix}$ by elementary column operations, where $\mathrm{Id}$ is the identity matrix.   In particular, $C_3$ is full row rank.
 Putting all the computations for $C_3$, $B_1$ and $B_2$ together, we find a $(d-2)\times (d-2) $ minor being $\pm 1$. Then the proof is done. 
\end{proof}

\bex We give a list of examples which satisfies Assumption \ref{assumption}:
 Figure 1(a), Figure 2, Figure 3(a),(b) in \cite{FY07} with $A,B,C$ corresponding to $\sA_1,\sA_2,\sA_3$ and Figure 1 in  \cite{Suc14B}.
It is also easy to find examples which do not satisfy Assumption \ref{assumption}, see Figure 1(b) in \cite{FY07} or the monomial (alias Ceva)  arrangement (with $m\geq 3$) in \cite[Example 6.11]{Dim17}.  
\eex
\subsection{A question by Denham and Suciu}
Recall that $\sA$ is a hyperplane arrangement in  $\CP^n$, defined by a product of degree 1 homogeneous polynomials 
$$ Q(\sA)\coloneqq \prod_{1\leq \ell \leq d}^d f_\ell.$$
This also gives a central hyperplane arrangement in $\C^{n+1}$, denoted by $\overline{\sA}$.
Let $$F(\sA)\coloneqq \{x\in \C^{n+1}\mid Q(\sA)(x)=1\}$$ denote the Milnor fiber of the arrangement $\overline{\sA}$. Then $F(\sA)$ admits a finite cyclic cover to $X(\sA)$.

For the proof of Theorem \ref{thm our DS}, we use the same construction  introduced by Denham and Suciu in \cite{DS14}. 
Their  construction focuses entirely on `non-modular torsion`: that is, $p$-torsion in the homology of cyclic covers, where $p$ does not divide the order of the cover, hence part of results in \cite{DS14} are proved under this assumption. We  modify all the related results in \cite{DS14} without this assumption and list them as follows.
%with the precise as in \cite{DS14}.

The next theorem, corresponding to \cite[Theorem 4.3]{DS14}, follows from Theorem \ref{thm L2} and Proposition \ref{prop large enough}.

\bt \label{thm DS 4.3}   Let $X$ be a complex smooth quasi-projective variety. Suppose that there is a 
orbifold fibration $f\colon  X \to \Sigma$  of type $(g,r,{\bm \mu})$ with $\prod_{j=1}^s \mu_j>1$.
 Take a prime number $p$ dividing $\prod_{j=1}^s \mu_j$. Then, for
all sufficiently large integer $N$, there exists a regular $N$-fold cyclic cover
$X^N \to X$ such that $H_1(X^N,\Z)$ has $p$-torsion. %\sorry{ In particular, the $p$-torsion part of $H_1(X^N,\Z) $
\et 
\begin{proof}
Let $\Gamma$ be the corresponding orbifold group. 
Take any epimorphism $\nu\colon \Gamma \twoheadrightarrow \Z$ and consider the covering space $X^\nu$ for the composed map $$\pi_1(X) \overset{f_*}{\twoheadrightarrow} \Gamma \overset{\nu}{\twoheadrightarrow} \Z.$$
Let $X^{\nu,N}$ denote the corresponding $N$-fold cover. Theorem \ref{thm L2} shows  $\alpha_1(X^{\nu},\C)<\alpha_1(X^\nu, \p)$. Then the claim follows from Proposition \ref{prop large enough}.
\end{proof}

For a technical reason, we need to allow multiplicities on the hyperplanes: if $\m\coloneqq(m_1,\ldots,m_d) \in \Z^{d}_{>0} $ is a positive lattice vector, then the pair $(\sA, \m)$ is called a multiarrangement. A defining
polynomial for the multiarrangement is the product
$$Q(\sA,\m)\coloneqq \prod_{\ell=1}^d f_\ell^{m_\ell}$$
and 
$$F(\sA,\m)\coloneqq \{x\in \C^{n+1}\mid Q(\sA,\m)(x)=1\} $$
is called the Milnor fiber of the multiarrangement $(\sA,\m)$. Set $m\coloneqq \sum_{\ell=1}^d m_\ell$. 
Then $F(\sA,\m) $ admits a regular $m$-fold cover of $X(\sA)$ given by the homomorphism \begin{center}
$\delta \colon \pi_1(X(\sA)) \to \Z/m\Z$, $ \gamma_\ell\mapsto m_\ell \mod m,$
\end{center}
where $\gamma_\ell$ is the meridian for $H_\ell$ in $X(\sA)$.

The following lemma corresponds to \cite[Proposition 6.7]{DS14}.
\bp \label{prop DS 6.7} Let $\sA$ be an hyperplane arrangement and let $X^N \rightarrow X$ be a connected regular $\mathbb{Z}/N\Z$ cover of $X=X(\sA)$ given by an epimorphism $\chi\colon \pi_1(X) \twoheadrightarrow \Z/N\Z $. Then  there exists infinitely many multiplicity vectors $\m\in \Z^{d}_{>0}$ for which the covering projection $F(\sA,\m) \to X $ factors through $X^N$
$$
\xymatrix{
F(\sA,\m) \ar[rd] \ar[r] & X^N \ar[d]\\
 & X
}
$$
Moreover, for any chosen prime $p>0$, we may choose $\m$ such that $\frac{m}{N}$ is not divisible by $p$, where $m=\sum_{\ell=1}^d m_\ell$.
\ep
\begin{proof}
For any $1\leq \ell \leq d$, say $\chi$ sends $\gamma_\ell$ to $\chi_\ell \mod N$, where $\chi_\ell \in [0, N-1]$ is an integer. In particular, $N$ divides $\sum_{\ell=1}^d \chi_\ell$.
Choose any multiplicity vector $(q_1,\ldots,q_d)\in \Z^{d}_{\geq 0}$ such that $m_\ell =\chi_\ell + N\cdot q_\ell >0$ and 
$\frac{m}{N}$ is not divisible by $p$, where  $m=\sum_{\ell=1}^d m_\ell$. 
Then it is clear $F(\sA,\m) \to X $ factors through $X^N$.
\end{proof}

\bex{\cite[Example 6.12]{DS14}} \label{ex key example} 
Fix an integer $\mu\geq 2$. Let $\sA_\mu$ be the deleted monomial arrangement,
%of the full monomial complex reflection group, 
where its defining equations in $\CP^2$ are $yz(x^\mu-y^\mu)(x^\mu-z^\mu)(y^\mu-z^\mu)  $. Ordering the
hyperplanes as the factors of the defining polynomial. Its projective complement $X(\sA_\mu)$ admits an orbifold map $X(\sA_\mu)\to \C^*$ given by
$$\dfrac{z^\mu(x^\mu -y^\mu)}{y^\mu(x^\mu-z^\mu)}, $$
which is of type $(0,2,\mu)$. %Take $\{z=0\}$ as the hyperplane at infinity. 
Moreover, $\sV^1(M(\sA),\C)$ has a translated component 
$$V\coloneqq \rho \otimes \{t^\mu, t^{-\mu}, \overbrace{ t,\ldots, t}^\mu , \overbrace{ t^{-1},\ldots, t^{-1}}^\mu , \overbrace{  1,\ldots ,1 }^\mu \mid t\in \C^*\} $$
with $\rho = (1,\zeta^{-1},\ldots ,\zeta^{-1} ,1,\ldots, 1 , \zeta, \ldots, \zeta )$. Here $\zeta$ is a primitive $\mu$-th root of unity. 

Fix a prime number $p$. For large enough $N$,
we can choose the multiplicity vector as $$\m=(\mu, kN-\mu, \overbrace{ 1,\ldots, 1}^\mu , \overbrace{ N-1,\ldots, N-1}^\mu , \overbrace{  N,\ldots ,N }^\mu ) $$
with $m=(2\mu+k)N$ such that $p$ is coprime with $2\mu+k$. Then this multiplicity vector satisfies the requirement in Proposition \ref{prop DS 6.7}.
\eex

Next we go to the construction considered in \cite{DS14}.
For $m_1>2$, by \cite[Definition 7.8]{DS14} we consider the parallel connection of the multiarrangement $(\sA,\m) $  with a central line arrangement in $\C^2$, denoted by $\sA \circ_{H_{1}}\mathcal{P}_{m_1} $, where the line arrangement has exactly $m_1$ hyperplanes.
Up to change of coordinates, we can assume that $H_1$ for $\sA$ is defined by $\{x_1=0\}$ and the defining equations for the line arrangement $\mathcal{P}_{m_1}$ are $\prod_{j=0}^{m_1-1}  (y_1-j\cdot y_2)$. Then $\sA \circ_{H_{1}}\mathcal{P}_{m_1} $ can be viewed as a central multiarrangement in $\C^{n+2}$ with the following  defining equations by the coordinates $(x,y_2)\in \C^{n+2}$ 
$$   x_1\cdot\big(\prod_{\ell=2}^{d} f_\ell^{m_\ell}(x)\big)\cdot \prod_{j=1}^{m_1-1} (x_1 - j\cdot y_2). $$
In particular, $ X(\sA \circ_{H_{1}}\mathcal{P}_{m_1}) \cong X(\sA)\times \Sigma_{0, m_1}$, where $ \Sigma_{0, m_1}\coloneqq \mathbb{CP}^1 \setminus \{ m_1 \text{ points}\}$. 
If $m_1=1$, then $ \sA \circ_{H_{1}}\mathcal{P}_{m_1}$ is simply the original arrangement $\sA$.

The following proposition corresponds to \cite[Lemma 8.4]{DS14}.
\bp \label{prop DS 8.4} With the above notations, we have a pullback diagram between the respective Milnor covers,
\[ \xymatrix{
F(\mathcal{A},\m)\ar[r]\ar[d] & F(\sA \circ_{H_{1}}\mathcal{P}_{m_1})\ar[d] \\
X(\mathcal{A})\ar[r]^{j} & X(\sA \circ_{H_{1}}\mathcal{P}_{m_1}) \\
} \]
where $j$ is the map given by choosing a base point in  $\mathcal{P}_{m_1}$.
Moreover,  if $m_1\geq 3$ and $i\geq 1$, there exists a surjective homomorphism
$$H_{i}\big(F(\sA \circ_{H_{1}}\mathcal{P}_{m_1}),\Z\big)_{\tor} \twoheadrightarrow  H_{i-1}(F(\mathcal{A},\m),\Z)_{\tor}. $$ %and $$ has non-trivial $p$-torsion for some degree $i\geq 1$, then $H_{i+1} ( )$
\ep 
\begin{proof}
The first claim is already proved in \cite[Lemma 8.4]{DS14} and we only need to prove the second one.

%Since hyperplane arrangement complement is a minimal CW complex, up to homotopy we consider the minimal cell structure for $X(\sA)$. 
 $ \Sigma_{0,m_1}$ has a simple cell structure with a single $0$-dimensional  cell, say $a$, and $(m_1-1)$ many $1$-dimensional cell, say $\{b_1,\ldots,b_{m_1-1}\}$.   
The product structure $ X(\sA \circ_{H_{1}}\mathcal{P}_{m_1}) \cong X(\sA)\times \Sigma_{0, m_1}$ gives a cell structure for $X(\sA \circ_{H_{1}}\mathcal{P}_{m_1})$. 
Note that $ F(\mathcal{A},\m)$ and $ F(\sA \circ_{H_{1}}\mathcal{P}_{m_1})$ are $\Z/m\Z$-fold covers of $ X(\mathcal{A})$ and $ X(\sA \circ_{H_{1}}\mathcal{P}_{m_1})$, respectively.
By choosing fixed lifts of the cells of $ F(\mathcal{A},\m)$ and $ F(\sA \circ_{H_{1}}\mathcal{P}_{m_1})$ respectively, we obtain free basis for the chain complex  $C_{*}\big( F(\mathcal{A},\m),\Z\big)$ and $C_{*}\big( F(\sA \circ_{H_{1}}\mathcal{P}_{m_1}),\Z\big)$ as  $R$-modules, where  $R=\Z[t]/(t^m-1)\cong \Z[\Z/m\Z]$.  
%Say $C_i(X(\sA),\Z)$ has a basis $\{c_{i,1}, \ldots, c_{i,k}\}$.  
The $R$-module map between finitely generated free $R$-modules
\begin{center}
$C_i(F(\sA,\m),\Z) \overset{\partial_i}{\to} C_{i-1}(F(\sA,\m),\Z)$
\end{center}
  can be written down as a matrix, say $A$, with entries in $\Z[t]/(t^m-1)$. 
By the product structure  $ X(\sA \circ_{H_{1}}\mathcal{P}_{m_1}) \cong X(\sA)\times \Sigma_{0, m_1}$, we have the following matrix for the map $C_{i+1}\big(F(\sA \circ_{H_{1}}\mathcal{P}_{m_1}),\Z\big)\to C_{i}\big(F(\sA \circ_{H_{1}}\mathcal{P}_{m_1}),\Z\big) $: 
\begin{center}
$\bordermatrix{%
 & (i-1,b_1)  & (i-1,b_2) & \cdots  & (i-1,b_{m_1-1}) & (i, a) \cr
(i,b_1)  &       A &      0 &      \cdots & 0 & (t-1)\mathrm{id}\cr
(i,b_2)  &       0 &      A &      \cdots & 0 & (t-1)\mathrm{id}\cr
 \vdots                    &  \vdots & \vdots &      \vdots & \vdots & (t-1)\mathrm{id}\cr
(i, b_{m_1-1})  &  0      & 0 &      \cdots & A & (t-1)\mathrm{id} \cr
(i+1, a)       &  0 & 0      &  \cdots  &  0 & *
}
$
\end{center}
Here for example $(i,b_1)$ stands for the basis $C_i(X(\sA), R)\otimes_R \langle b_1 \rangle $, where $\langle b_1 \rangle$  is the free $R$-module generated by $b_1$. 
%Note that we take the minimal CW structure for $X(\sA)$, hence every entry in $A$ is divisible by $(t-1)$. 
When $m_1\geq 3$, by elementary  operations the above matrix becomes \begin{center}
$\begin{pmatrix} 
  A &      0 &      \cdots & 0 & (t-1)\mathrm{id}\\
  0 &      A &      \cdots & 0 & 0\\     
  \vdots &  \vdots & \vdots &      \vdots & \vdots \\
  0 & 0 &      \cdots & A & 0 \\
       0 & 0     &  \cdots  &  0 & *
\end{pmatrix}
$
\end{center}
Consider $\Z[t]/(t^m-1)\cong \Z^m$ as an abelian group. Then $A $ can be also viewed as a matrix (with different size) with entries in $\Z$ and the torsion part of  $H_{i-1}(F(\sA,\m),\Z) $ is determined by $A$.
Then the above computation gives the claim.
\end{proof}

Denham and Suciu iterated the parallel connections of the multiarrangement  $(\sA,\m)$ with a collection of line arrangements and obtained the following definition. 
\bd \cite[Definition 8.1]{DS14}
The polarization of a multiarrangement $(\mathcal{A},\m)$, denoted by $\mathcal{A}|| \m$, is the arrangement of $m=\sum_{\ell=1}^d m_\ell$ hyperplanes given by
\[ \mathcal{A}||\m=\mathcal{A}\circ_{H_{1}}\mathcal{P}_{m_1}\circ_{H_{2}}\mathcal{P}_{m_{2}}\circ_{H_{3}}\cdots\circ_{H_{d}}\mathcal{P}_{m_{d}}. \]
%where $\mathcal{P}_{n}$ denotes a central arrangement with $n$ lines in $\mathbb{C}^{2}$.
\ed
Note that the polarization of a multiarrangement is a reduced hyperplane arrangement with $m=\sum_{\ell=1}^d m_\ell$ many hyperplanes. Let $F(\mathcal{A}|| \m)$ denote the Minor fiber for the arrangement $(\mathcal{A}|| \m)$.

\begin{proof}[Proof of Theorem \ref{thm our DS}]
We take $\sA_\mu$ to be the arrangement considered in Example \ref{ex key example}  with $X$ being its complement. There exists an orbifold map $f$ of type $(0,2,\mu)$ for $X$. By choosing $\mu$ such that $p$ divides $\mu$,  for any integer $N$ big enough Theorem \ref{thm DS 4.3} gives $X^N$, a $\Z/N\Z$-cover of $X$, such that  $H_1(X^N,\Z)$ has non-trivial $p$-torsion. %Proposition \ref{prop DS 6.7} implies that there exists a multiplicity vectors $\m\in \Z^d_{>0}$  such that $m_\ell \geq 2$ for all $1\leq \ell \leq 3\mu +2$ and $p$ does not divides $\frac{m}{N}$, where $m=\sum_{\ell=1}^{3\mu+2} m_\ell .$ 
Take the multiplicity vector given in Example \ref{ex key example}. 
Since $F(\sA_\mu,\m)$ is a $\frac{m}{N}$-fold cover of $X^N$ and $p$ does not divides $\frac{m}{N}$, by \cite[Lemma 2.4]{DS14} $H_1(F(\sA_\mu, \m),\Z)$ also has non-trivial $p$-torsion. Using Proposition \ref{prop DS 8.4} repetitively, we get that $H_{2\mu+3}(F(\sA_\mu ||\m),\Z)$ has non-trivial $p$-torsion (since there are exactly $\mu$ many $m_\ell=1$). 

Then we take $\mathcal{B}= \sA_\mu ||\m$, which is a reduced hyperplane arrangement with $\vert \mathcal{B} \vert =m$.   
   For the additional requirement  $p$ dividing $\vert \mathcal{B} \vert$, we can  choose $N$ such that $p$ divides $N$, hence $p$ divides $m$. Then the claim follows.
%Proposition 
%By choosing suitable $m_{H}$, we can find a multiarrangement $(\mathcal{A},m)$ such that its Milnor fiber $F(\mathcal{A},m)$ has $p$-torsion with $p\mid N$.
\end{proof}

\br As we used  the same construction  as in \cite{DS14}, for any prime number $p\geq 2$,  the Milnor fiber of the central arrangement we construed can have  non-trivial $p$-torsion in homology at degree $2p+3$, same as in \cite[Corollary 8.8]{DS14}.  
Denham and Suciu asked if there is a hyperplane arrangement $\sA$ whose Milnor fiber $F(\sA)$ has torsion in $H_1(F(\sA),\Z)$ \cite[Question 8.10]{DS14}. Yoshinaga gave the first example where $H_1(F(\sA),\Z)$ has non-trivial $2$-torsion, see \cite{Yos20}.
\er
%So far, we construct a hyperplane arrangement $\mathcal{A}$ for which $H_{\ast}(F(\mathcal{A}),\mathbb{Z})$ has $p$-torsion and $p\mid |\mathcal{A}|$.

%%%%%%%%%%%%%%%%%%%%%%%%%%%%%%%%%%%%%%%%%%%%%%%%55

\end{document}